\numberwithin{equation}{section}
\numberwithin{figure}{section}
\numberwithin{table}{section}
\newcommand\dD{\mathrm{d}}
\newcommand\ds{\displaystyle}
\newcommand{\N}{{\mathbb N}}
\newcommand{\C}{{\mathbb C}}
\newcommand{\R}{{\mathbb R}}
\newcommand\cC{\mathcal C}
\newcommand\cD{\mathcal D}
\newcommand\cE{\mathcal E}
\newcommand\cI{\mathcal I}
\newcommand\cJ{\mathcal J}
\newcommand\cN{\mathcal N}
\newcommand\scR{R}
\newcommand\cP{\mathcal P}
\newcommand\cS{\mathcal S}
\newcommand\cT{\mathcal T}
\newcommand{\mC}{{\mathscr C}}
\newcommand{\mP}{{\mathscr P}}
\newcommand{\be}{\begin{equation}}
\newcommand{\ee}{\end{equation}}
\newtheorem{theorem}{Theorem}[section]
\newtheorem{proposition}[theorem]{Proposition}
\newtheorem{lemma}[theorem]{Lemma}
\subjclass[2020]{Primary 35Q40, Secondary
 65N08,  
 65N35  
}
\begin{document}

\title[On the approximation of the von Neumann equation]{On the
  approximation of the von Neumann equation in the semiclassical
  limit. Part II : numerical analysis}

\author[Francis Filbet]{Francis Filbet}
\address[F.F.]{IMT, Université de Toulouse, 31062 Toulouse Cedex, France}
\email{francis.filbet@math.univ-toulouse.fr}

\author[François Golse]{Fran\c cois Golse}
\address[F.G.]{CMLS, \'Ecole polytechnique, 91128 Palaiseau Cedex, France}
\email{francois.golse@polytechnique.edu}

\begin{abstract}
  This paper is devoted to the numerical analysis of the Hermite spectral
  method proposed in \cite{FG1:24}, which provides, in the
  semiclassical limit,  an asymptotic
  preserving approximation of the von Neumann equation. More
  precisely, it relies on the use of so-called Weyl's variables to effectively address the
  stiffness associated to the equation. Then by employing a
  truncated Hermite expansion of the density operator, we successfully
  manage this stiffness and provide error estimates by leveraging the
  propagation of regularity in the exact solution.
  \end{abstract}

\date{\today}

\maketitle

\textrm{Keywords : }{Quantum mechanics, von Neumann equation, Semiclassical limit, Hermite spectral method.}

\tableofcontents


\section{Introduction}
\label{sec:1}
\setcounter{equation}{0}
\setcounter{figure}{0}
\setcounter{table}{0}


Quantum dynamics exhibits high-frequency waves and requires the numerical resolution of small
wavelengths, which presents substantial computational difficulties
\cite{bader2014, fang2018, hochbruck1999,  jin2011}. Hence, for such
problems,  it is essential to consider an appropriate mesh strategy, which includes the correlation between time steps,
mesh size, and the physical wavelength $\lambda=2\pi\hbar/p$, where
$\hbar$ is the Planck
constant and $p$ the particle momentum \cite[section 17, Chapter III]{LL}. Similarly, numerical  schemes used to solve the
Schr{\"o}dinger equation typically demand that both the time step
$\Delta t$ and mesh size, in the semiclassical regime when
$\hbar\ll 1$, be of order $O(\hbar)$, or may even $o(\hbar)$ \cite{athanass, markowich1999}. Conversely,  time splitting methods have
the potential to enhance large time steps  when only the physical
observables are concerned, as  has been shown in
\cite{BaoJinMarko}. A fundamental concept for understanding these mesh strategies is the Wigner transform, which serves
as a useful tool for investigating the semiclassical limit of the
Schr{\"o}dinger equation.

Furthermore, in \cite{FGJinPaul},  uniform  error estimates of  time splitting methods have been established for the von Neumann equation
in the semiclassical regime.  This equation   describes the evolution of
mixed quantum states, and reduces to the Schr{\" o}dinger equation in the case of pure
quantum states. More precisely,  an operator $\hat\rho^\hbar$, called the density-matrix
operator,  satisfies the Liouville-von Neumann or simply von Neumann equation, in the operator formulation
\begin{equation}
\label{vN}
\left\{
\begin{array}{l}
\ds i\,\hbar \,\partial_t \hat\rho^\hbar \,=\, [H,\hat\rho^\hbar],
\\[1.1em]
\hat\rho^\hbar(0)\,=\,\hat\rho^\hbar_{\rm in}\,,
\end{array}\right.
\end{equation}
where $\hbar$ is the reduced Planck constant.  The operator $\hat\rho^\hbar$
is acting on the  Hilbert space $L^2(\R^d)
\, :=\, L^2(\R^d,\C)$ equipped with the inner product
\begin{equation}
  \label{inner:p}
\langle f, g\rangle \,:=\, \int_{\R^d} f(x)\,\overline{g(x)}\,\dD x\,,
\end{equation}
with associated norm given by $\|\cdot\|  :=
\sqrt{\langle\cdot,\cdot\rangle}$. Moreover, the quantum mechanical Hamiltonian $H$ is given, throughout this paper by
$$
H \,:=\, -\frac{\hbar^2}{2m}\,\Delta  \,+\, V (X),
$$
where $V$ a real-valued function such that $H$ is a self-adjoint operator on $L^2(\R^d)$. On the other hand, 
for all $\phi\in L^2(\R^d)$, we set $V \phi(X) \,:=\, V (X)\,\phi(X)$ while
$$
[H,\hat\rho^\hbar] \,:=\, H \,\hat\rho^\hbar - \hat\rho^\hbar\,H
$$
is the commutator. Here,  we suppose that  $\hat\rho^\hbar(t)$ is a density
matrix or operator on $L^2(\R^d)$, assuming that
\begin{equation*}
\hat\rho^\hbar(t)^*\,=\,\hat\rho^\hbar(t)\,\ge\,
0\,,\quad\text{Tr}(\hat\rho^\hbar(t))\,=\,1, \quad t \geq 0.
\end{equation*}
In particular, $\hat \rho^\hbar(t)$ is trace-class and therefore
Hilbert-Schmidt, hence  by \cite[Theorem 6.12]{brezis},  there exists a
unique function $\rho^\hbar(t|\,X,Y)\in {L^2_{X,Y}:= L^2(\R^d\times \R^d, \C)}$, such that  for each $\phi\in L^2(\R^d)$, we
define $\hat\rho^\hbar(t)\phi$ as the function
\begin{equation}
\label{naple:1}
X\,\mapsto \, \hat\rho^\hbar(t)\,\phi(X)\,:=\;\int_{\R^d}\rho^\hbar(t|\,X,Y)\,\phi(Y)\,\dD Y\,.
\end{equation}
Thus, using this latter representation, the von Neumann equation can be written as follows for the
function $(X,Y)\mapsto \rho^\hbar(t |\,X,Y)$ : 
\begin{equation}
  \label{vNn}
  \left\{
\begin{array}{l}
\ds i\,\hbar\,\,\partial_t \rho^\hbar(t|\,X,Y) \,=\,-\frac{\hbar^2}{2\,m}\,\left(\Delta_X\,-\,\Delta_Y\right)\,\rho^\hbar(t|\,X,Y)\,+\,\left(V(X)\,-\,V(Y)\right)\,\rho^\hbar(t|\,X,Y)\,,
\\[1.1em]
 \ds     \rho^\hbar(t=0|\,.,.)\,=\,\rho^\hbar_{\rm in}\,.
    \end{array}\right.
\end{equation}

First, it is important to emphasize that when $\hbar$ is significantly
smaller than $1$, the dynamics governed by the von Neumann equation
exhibits stiffness, resulting in a multiscale problem. Consequently,
one of the primary challenges in quantum dynamics is the development
of efficient numerical techniques capable of describing a wide
range of wavelengths. For example, we refer the reader to \cite{jin2022, miao2023, russo2013,russo2014}  and the
associated literature for discussions concerning the numerical
approximation of the Schr\"odinger equation within the semiclassical
limit.  Alternative methods based on stochastic algorithms \cite{XYZZ}
may be also applied in this context.

Furthermore, the solution of the von Neumann equation is a density
operator, {\it i.e.} a Hermitian, positive semi-definite operator with trace
equal to $1$. Preserving these intrinsic properties at the discrete
level is obviously crucial to ensure the generation of realistic
physical outcomes. In this regard, an initial approach was 
introduced in  \cite{hellsing1986efficient}  which involves the
successive application of short-time propagators evaluated using fast Fourier transforms. This method
exhibits considerable efficiency for short time intervals,
particularly when the density matrix is spatially localized. However, it is worth noting that it requires a small time step of the order
of $\hbar$ or potentially even smaller. Later in \cite{berman1992solution}, the authors proposed a numerical scheme
for a related model, again based on the Fourier pseudo-spectral
method. It allows a description both in configuration as well as in
momentum space. More recently, various structure preserving  schemes
have been studied to guarantee  trace conservation and positivity of
the discrete density matrix  allowing long time simulations.

Recently, we proposed a new approach based on the so-called Weyl's
variables introduced in part I \cite{FG1:24}  of the present paper  to address the
  stiffness associated with the equation, and applied a truncated
  Hermite expansion of the density operator : see \cite{FG1:24} for more details. Actually, this approach is inspired by the link between the von Neumann equation and the Wigner equation, which share the same kind of numerical difficulties arising from the nonlocal and highly-oscillating pseudo-differential
operator associated to the potential energy. In \cite{shao2011adaptive,xiong2016advective}, it has been shown that the Wigner function can be
discretized in a robust way using adaptive pseudo-spectral methods, where the oscillatory components introduced by the Wigner kernel are solved exactly. Finally, the oscillatory quantum effects can also
be mitigated by decomposing the potential into classical and quantum
parts \cite{sellier2015comparison, sels2013wigner} or by reformulating
the Wigner equation using spectral components of the force field
\cite{jcp2017}. Furthermore, as we will see, the use of Hermite
expansion allows for the partial preservation of the operator structure
$\hat\rho^\hbar$, which will be helpful in ensuring the stability of this approach. It is important to emphasize that in the Wigner equation, the term
involving the potential is nonlocal. However, when the equation is
formulated in Weyl's variables, this term remains local since it still
appears as a multiplication. Additionally, the
dependence on $\hbar$ is significantly reduced. After
Hermite discretization, our approach can be viewed as a form of
discretization of the Wigner equation, but this is specific to the
Hermite-Galerkin method since Hermite functions are eigenvalues of the
Fourier transform. Using another discretization method  would lead to
a different analysis of the Wigner equation and of the von Neumann
equation written in Weyl's variable (see \eqref{vNvW} below).

Our aim here is to analyze the Hermite spectral method proposed in
\cite{FG1:24}, for  equation \eqref{vNn}. We begin by rewriting the von Neumann
equation in terms of Weyl’s variables, which eliminates the stiffness
associated with $\hbar\ll 1$. The resulting equation is well-suited for the
semiclassical limit as $\hbar$ tends to zero (Theorem
\ref{th:01}). Next in Section \ref{sec:2}, we expand
the density matrix using Hermite polynomials, leading to a spectrally
accurate approximation. We provide new estimates on the solution of
\eqref{vNn}, demonstrating the propagation of regularity in a weighted
Sobolev space. Taking advantage of the regularity of the solution, we first derive error estimates
in terms of the number of Hermite modes, which are uniform with
respect to the parameter $\hbar$ (Theorem \ref{th:errorvN}). Additionally, we establish
error estimates for the Hermite spectral method applied to the
semiclassical limit model (Theorem \ref{th:errorSC}).  Then, Sections \ref{sec:3} and \ref{sec:4}  are
dedicated to the proofs of these results (Theorems \ref{th:errorSC}
and  \ref{th:errorvN}). Finally in Section \ref{sec:5}, we perform
numerical simulations to illustrate the spectral accuracy of the
Hermite method in the last section and refer the reader to \cite{FG1:24} for
more extensive numerical simulations.

\subsection{Quantum dynamics in  Weyl's variables}
\label{sec:1.1}
%
%

To remove the stiffness due to the parameter $\hbar$ in the von
Neumann equation, the key idea consists in rewriting
\eqref{vNn} in the new variables defined below,  called “Weyl's variables”:
\begin{equation}
  \label{WeylVar}
  \left\{
    \begin{array}{l}
      \ds x\,:=\,\frac{X\,+\,Y}{2}\\[0.9em]
      \ds y\,:=\,\frac{X\,-\,Y}{\hbar}\,,
    \end{array}\right.
\end{equation}
which can be explicitly inverted as
$$
X\,=\,x\,+\,\frac{\hbar}{2}\,y\,,\qquad Y\,=\,x\,-\,\frac{\hbar}{2}\,y\,.
$$
Although this terminology, introduced in \cite{FG1:24}, is not common, it is obviously suggested by
Weyl's  quantization (see formula (1.1.9) of
\cite{Lerner}). Hence, we set
\begin{equation}
\label{naple:2}
R^\hbar(t|\,x,y)\,:=\, \rho^\hbar\left(t |\,X,Y\right)\,,
\end{equation}
and find that $\rho^\hbar$ is solution to \eqref{vNn} if and only if
$R^\hbar$ satisfies 
\begin{equation}
  \label{vNvW}
   \left\{
     \begin{array}{l}
       
\ds \partial_t R^\hbar(t|\,x,y)\,=\,i\,\sum_{j=1}^d\partial_{x_j}\partial_{y_j}
R^\hbar(t|\,x,y)\,-\,i\frac{V\left(x+\frac\hbar 2\,
       y\right)\,-\,V\left(x-\frac\hbar 2\, y\right)}{\hbar}\,
       R^\hbar(t|\,x,y)\,,
\\[1.1em]
      R^\hbar(t=0 |\,.,.)\,=\,R^\hbar_{\rm in}\,.
      \end{array}\right.
  \end{equation}
  

{Before continuing with our presentation, let's summarize the
different formulations of the von Neumann equation \eqref{vN}. We introduced
\begin{itemize}
\item the density matrix
operator $\hat{\rho}^\hbar$, which refers to the solution of the von
Neumann equation \eqref{vN},
\item while $\rho^\hbar$ denotes the integral kernel
associated with $\hat{\rho}^\hbar$, which is a solution to the
equivalent equation \eqref{vNn},
\item the function $R^\hbar$ corresponds to  $\rho^\hbar$ expressed in terms of the Weyl variables \eqref{WeylVar}, and it
  is a solution of \eqref{vNvW}.
\end{itemize}
In the following, we will only consider the solution $R^\hbar$ to
\eqref{vNvW} and study its asymptotic behavior when $\hbar\ll 1$. }

Let us first review some elementary properties of the operator $\hat\rho^\hbar$ and
their interpretation in terms of the function $R^\hbar$. We first recall that $\hat\rho^\hbar$ is a density operator on $L^2(\R^{2d})$, that is
$\hat\rho^\hbar (t)^*\,=\,\hat\rho^\hbar (t)$, for all $t\in\R$, which means that  $R^\hbar$ satisfies 
\begin{equation}
  \label{SAvW}
R^\hbar(t |\,x,-y)\,=\,\overline{R^\hbar(t |\,x,y)}\,,
\end{equation}
whereas the trace property becomes 
\begin{equation}
  \label{TRvW}
\text{Tr}\left(\hat\rho^\hbar (t)\right) \,=\,\int_{\R^d}\rho^\hbar(t|\,X,X)\,\dD X\,=\,\int_{\R^d}
R^\hbar(t|\,x,0)\,\dD x\,=\,1\,.
\end{equation}
Translating the positivity condition on $\hat\rho^\hbar(t)\geq 0$ in terms
of the function $R^\hbar$ is more difficult : for all  $\phi \in L^2(\R^d)$, we have
\begin{eqnarray*}
0\,\leq \,\langle\phi|\,\hat\rho^\hbar(t)|\phi\rangle &:=&\iint_{\R^{2d}}\overline{\phi(X)}\, \rho^\hbar(t|\,X,Y)\,\phi(Y)\,\dD X\dD Y
\\[0.9em]
&=&\hbar^d\,\iint_{\R^{2d}} \overline{\phi(x+\tfrac\hbar 2  y)} \,R^\hbar(t|\,x,y)\,\phi(x-\tfrac\hbar 2 y)\,\dD x\,\dD y\,.
\end{eqnarray*}
We refer to \cite{GolseMoller} for a characterization of
rank-$1$ density operators in Weyl's variables.  Finally, for all
$t\geq 0$, we define  $\cS_\hbar(t)$ to be
the one parameter group associated to the von Neumann equation
\eqref{vNvW}, so that, for all $R^\hbar_{\rm in}\in L^2(\R^{2d})$,
\begin{equation}
\label{def:S}
R^\hbar(t) \,=\, \cS_\hbar(t)\,R^\hbar_{\rm in}\,.
\end{equation}
A simple change of variables implies the conservation of the $L^2$ norm for equation \eqref{vNvW}, summarized in the following proposition.

\begin{proposition}
  \label{prop:L2}
Consider $R^\hbar$ the solution to the von Neumann equation
\eqref{vNvW}.  Then, we have  for each $t\geq 0$,
\be
\label{esti:L2}
  \iint_{\R^{2d}}|R^\hbar(t |\,x,y)|^2\,\dD x\,\dD y\,=\,\| \cS_\hbar(t) R^\hbar_{\rm in}\|^2_{L^2}\,\,=\,\| R^\hbar_{\rm in}\|^2_{L^2}\,<\,\infty\,.
 \ee
\end{proposition}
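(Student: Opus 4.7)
The plan is to reduce this conservation to the well-known preservation of the Hilbert--Schmidt norm of $\hat\rho^\hbar(t)$ by pushing forward the integral under the change of variables \eqref{WeylVar}.

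First, I would recall that since $\hat\rho^\hbar(t)$ is Hilbert--Schmidt with kernel $\rho^\hbar(t|X,Y)$, one has
\[
\|\hat\rho^\hbar(t)\|_{\mathrm{HS}}^2 \,=\, \iint_{\R^{2d}}|\rho^\hbar(t|\,X,Y)|^2\,\dD X\,\dD Y,
\]
and that this quantity is preserved by the evolution \eqref{vN}, since $\hat\rho^\hbar(t) = U(t)\hat\rho^\hbar_{\rm in}U(t)^*$ with $U(t) := e^{-itH/\hbar}$ unitary on $L^2(\R^d)$. Alternatively, a direct energy estimate on \eqref{vNn} gives the same conclusion: multiplying by $\overline{\rho^\hbar}$, integrating over $\R^{2d}$, and taking real parts, the potential contribution vanishes because $V(X)-V(Y)$ is real (so the integrand becomes purely imaginary), and the kinetic contribution vanishes after integration by parts since $-\frac{\hbar^2}{2m}(\Delta_X-\Delta_Y)$ is formally self-adjoint on $L^2(\R^{2d})$.

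Next, I would compute the Jacobian of \eqref{WeylVar}. Writing $X = x+\tfrac{\hbar}{2}y$ and $Y = x-\tfrac{\hbar}{2}y$, the Jacobian matrix has the block form $\bigl(\begin{smallmatrix} I_d & \tfrac{\hbar}{2}I_d \\ I_d & -\tfrac{\hbar}{2}I_d\end{smallmatrix}\bigr)$, whose determinant has absolute value $\hbar^d$. Combined with \eqref{naple:2}, this yields
\[
\iint_{\R^{2d}}|\rho^\hbar(t|\,X,Y)|^2\,\dD X\,\dD Y \,=\, \hbar^d\,\iint_{\R^{2d}}|R^\hbar(t|\,x,y)|^2\,\dD x\,\dD y\,.
\]
Applying this identity at time $t$ and at $t=0$, and invoking the preservation of the left-hand side, the common factor $\hbar^d$ cancels and \eqref{esti:L2} follows, which is equivalent to the statement $\cS_\hbar(t)$ is an isometry of $L^2(\R^{2d})$.

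There is no serious obstacle in this argument; everything reduces to a Jacobian computation combined with a standard unitarity fact. The only point requiring mild care is the justification of the integration by parts (or, equivalently, the unitarity of $U(t)$) when $R^\hbar_{\rm in}$ is only assumed to lie in $L^2(\R^{2d})$, which is handled by a routine density argument on Schwartz initial data followed by continuity of the evolution on $L^2$.
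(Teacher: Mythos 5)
Your proposal is correct and follows essentially the same route as the paper: conservation of the Hilbert--Schmidt norm of $\hat\rho^\hbar(t)$ under conjugation by the unitary group $e^{-itH/\hbar}$, identification of that norm with the $L^2$ norm of the kernel $\rho^\hbar$, and the Jacobian factor $\hbar^d$ from the change of variables \eqref{WeylVar}. The alternative direct energy estimate and the density remark are fine additions but not needed.
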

\begin{proof}
  On the one hand, since  for all $t$, 
  $$
  \hat\rho^\hbar(t)\,=\,\exp(-itH/\hbar)\,\hat\rho^\hbar(0)\,\exp(itH/\hbar),
  $$
  one has
\[
\hat\rho^\hbar(t)^*=\hat\rho^\hbar(t) \quad\text{ since }\,\hat\rho^\hbar(0)^*=\hat\rho^\hbar(0),\quad\text{ and }\quad\mathrm{Tr}(\hat\rho^\hbar(t)^2)=\mathrm{Tr}(\hat\rho^\hbar(0)^2)<\infty\,.
\]
Since the Hilbert-Schmidt norm of an operator is the $L^2$ norm of its integral kernel, 
\[
\|\rho^\hbar(t|\cdot,\cdot)\|^2_{L^2(\mathbf R^d\times\mathbf R^d)}=\mathrm{Tr}(\hat\rho^\hbar(t)^2)=\mathrm{Tr}(\hat\rho^\hbar(0)^2)=\|\rho^\hbar(0|\cdot,\cdot)\|^2_{L^2(\mathbf R^d\times\mathbf R^d)}\,.
\]
The simple change of variables \eqref{WeylVar}, leads to
\[
\|\rho^\hbar(t|\cdot,\cdot)\|^2_{L^2(\R^d\times\R^d)}\,=\,\hbar^d\| R^\hbar(t|\cdot,\cdot)\|^2_{L^2(\R^d\times\R^d)}\;,
\]
which implies in turn the conservation of the $L^2$ norm for
equation \eqref{vNvW} : for each $t\in[0,+\infty)$,
$$
  \iint_{\R^{2d}}|R^\hbar(t|\,x,y)|^2\,\dD x\,\dD y\,=\,\| 
R^\hbar_{\rm in}\|^2_{L^2}\,<\,\infty\,.
$$
  \end{proof}

  This latter property will be crucial in the
stability analysis  of our numerical method but, it is not enough
in itself to prove convergence. We shall also need to establish some regularity estimates obtained by propagating moments
and derivatives in $L^2$ for the solution of the von Neumann equation
\eqref{vNvW}.

\subsection{Semiclassical limit}
\label{sec:1.2}
The formulation of the von Neumann
equation using Weyl's variables facilitates the semiclassical
limit. Passing to the limit as $\hbar\to 0$ in the latter equation  does not lead
to any difficulty related to the stiffness in $\hbar$. More precisely,  since $V\in
\mC^1(\R^d)$ we have :
$$
\frac{V(x+\frac\hbar 2\, y)-V(x-\frac\hbar 2\, y)}{\hbar}\to\nabla V(x)\cdot y\qquad\text{as }\hbar\to 0\,.
$$
Therefore, setting
\begin{equation}
  \label{def:Eh}
\cE^\hbar(x,y) \,:=\,  \frac{V(x+\frac\hbar 2\, y)-V(x-\frac\hbar 2
\,  y)}{\hbar}\,-\, \nabla V(x)\cdot y\,,
\end{equation}
the function  $R^\hbar$ now  solves
\begin{equation}
  \label{vNvW2}
   \left\{
    \begin{array}{l}
\ds\partial_t R^\hbar(t|\,x,y)\,=\,i\,\sum_{j=1}^d\partial_{x_j}\partial_{y_j}
R^\hbar(t|\,x,y)\,-\,i\left(\nabla V(x)\cdot y  \,+\,\cE^\hbar(x,y)\right)
\,R^\hbar(t|\,x,y)\,,
\\[1.1em]
      R^\hbar(t=0 |\,.,.)\,=\,R^\hbar_{\rm in}\,,
      \end{array}\right.
\end{equation}
where $\cE^\hbar$ goes to zero as $\hbar\rightarrow 0$. The error made
in approximating the solution $R^\hbar$ of \eqref{vNvW}, or
equivalently \eqref{vNvW2},  by the solution $\scR$ obtained after neglecting the source term $\cE^\hbar$,  corresponds to the
semiclassical limit, given by
\begin{equation}
  \label{eq:lim}
  \left\{
    \begin{array}{l}
\ds\partial_t\scR(t|\,x,y)\,=\,i\,\sum_{j=1}^d\partial_{x_j}\partial_{y_j}\scR(t|\,x,y)\,-\,i \,\nabla V(x)\cdot y  \,\scR(t|\,x,y)\,,
\\[1.1em]
      \scR(t=0 |\,.,.) \,=\,\scR_{\rm in}\,.
      \end{array}\right.
      \end{equation}
  This limit was made rigorous by P.-L. Lions and T. Paul
 \cite{LionsPaul}, and  P. Markowich and C. Ringhofer \cite{MN89,MR89}
for each smooth potential $V\in \mC^\infty$. We also refer to \cite[Section 1.4]{ref2},
where an asymptotic expansion of $\cE^\hbar$ in powers of $\hbar$ is
performed for smooth potentials  $V\in \mC^\infty$. In the sequel, we define
$\cS_0(t)$ to be  the one-parameter group  corresponding to the semiclassical limit equation
\eqref{eq:lim}, so that
\begin{equation}
\label{def:S0}
\scR(t) \,=\, \cS_0(t) \,\scR_{\rm in}.
\end{equation}

We now restrict ourselves to the one-dimensional case
$d=1$ and present an error estimate between the solutions
$R^\hbar(t|\,\cdot,\cdot)$ of the von Neumann equation in Weyl’s variables  \eqref{vNvW} and $\scR(t|\,\cdot,\cdot)$ of the semiclassical equation 
\eqref{eq:lim} in terms of $\hbar$.  The key point here is to establish some  regularity
estimates on the solution to \eqref{eq:lim}. To this effect, we
introduce the following quantity for any function $F$ and each integer
$m\geq 0$ :
\begin{equation}
  \label{def:Nm}
\cN_m(t)\,:=\, \sum_{a+b+\alpha+\beta\le m\atop a,b,\alpha,\beta\in\N}\|x^a\partial^b_xy^\alpha\partial^\beta_y F(t)\|_{L^2}\,.
\end{equation}

Let us fix an integer $m\geq 0$ and suppose that the potential $V$ is such that
\begin{equation}
  \label{hyp:V1}
  V\in \mC^{m+1}(\R) \quad{\rm with}\quad m\ge 1, \quad{\rm and }\quad
  V(x)\to+\infty \quad {\rm as}\quad |x|\to\infty\,,
\end{equation}
and for all $n=2,\ldots,m+1$, there exists a constant $C>0$, independent of $\hbar$, such that 
\begin{equation}
  \label{hyp:V2}
\| V^{(n)}\|_{L^\infty}  \,\leq\, C,{\quad {\rm with }\quad V^{(n)} \,:=\,
\frac{d^n V}{d x^n}}\,. 
\end{equation}
We prove the following result.

\begin{theorem}
  \label{th:01}
 Assume that $V$ satisfies \eqref{hyp:V1}-\eqref{hyp:V2} with $m=3$ while  for all integers  $a$, $b$,
$\alpha$ and $\beta$ such that  $a+b+\alpha+\beta\le 3$, we have
\begin{equation}
  \label{hyp:Rhat3}
  \left\{
    \begin{array}{l}
\ds\| x^a\partial^b_xy^\alpha\partial^\beta_y \scR_{\rm in} \|_{L^2}  \,<
\, \infty,
      \\
      \ds\|  R^\hbar_{\rm in} \|_{L^2}  \,<
      \, \infty.
      \end{array}\right.
      \ds \end{equation} 
Then  the solution $R^\hbar$ to  the von Neumann equation \eqref{vNvW} and
the solution $\scR$ to \eqref{eq:lim}  with the initial data
$R^\hbar_{\rm in}$ and $\scR_{\rm in}$ satisfy 
$$
\|R^\hbar(t)\,-\,\scR(t)\|_{L^2} \,\le\, \|R^\hbar_{\rm in}\,-\,\scR_{\rm in}\|_{L^2} \,+\,\cC(t)\,\hbar^2\,,
$$
where $\cC(t)$ is given by the following expression
$$
\cC(t)\,:=\,\frac1{24}\,\|V^{(3)}\|_{L^\infty}\,\frac{e^{C_3[V]\,t}\,-\,1}{C_3[V]}\,\cN_{3}(0)\,,
$$
with  $C_3[V]$ defined later in \eqref{def:Cm} and $\cN_3(0)$  given in
\eqref{def:Nm} applied to $\scR_{\rm in}$.
\end{theorem}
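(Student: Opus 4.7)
The plan is to cast the difference $W := R^\hbar - \scR$ as a solution to the full von Neumann equation \eqref{vNvW} perturbed by an explicit $\hbar^2$ source, and then apply Duhamel together with the $L^2$-isometry from Proposition \ref{prop:L2}. Subtracting \eqref{eq:lim} from \eqref{vNvW} and reshuffling the potential so that the linear homogeneous part is exactly the generator of $\cS_\hbar$, one gets
\[
\partial_t W \,=\, i\,\partial_x\partial_y W \,-\, i\,\tfrac{V(x+\hbar y/2) - V(x-\hbar y/2)}{\hbar}\,W \,-\, i\,\cE^\hbar(x,y)\,\scR(t|x,y), \qquad W(0)=R^\hbar_{\rm in}-\scR_{\rm in}\,.
\]
Duhamel's formula and Proposition \ref{prop:L2} then yield
\[
\|W(t)\|_{L^2} \,\le\, \|R^\hbar_{\rm in}-\scR_{\rm in}\|_{L^2} \,+\, \int_0^t \|\cE^\hbar\,\scR(s)\|_{L^2}\,\dD s\,.
\]

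Next, a third-order Taylor expansion of $V(x\pm\hbar y/2)$ around $x$ in \eqref{def:Eh} cancels the first and second order contributions by symmetry and gives the pointwise bound $|\cE^\hbar(x,y)|\le\tfrac{\hbar^2|y|^3}{24}\|V^{(3)}\|_{L^\infty}$. Hence $\|\cE^\hbar\,\scR(s)\|_{L^2}\le \tfrac{\hbar^2}{24}\|V^{(3)}\|_{L^\infty}\,\cN_3(s)$, where $\cN_3$ is the quantity defined in \eqref{def:Nm} applied to $\scR$. The remaining task is thus to bound $\cN_3(s)$ uniformly on $[0,t]$.

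The main obstacle is to propagate this weighted regularity and establish $\cN_3(s)\le e^{C_3[V]\,s}\,\cN_3(0)$ along the semiclassical flow \eqref{eq:lim}. I would proceed by a coupled $L^2$ energy estimate on the family $\{x^a\partial_x^b y^\alpha\partial_y^\beta\scR\}_{a+b+\alpha+\beta\le 3}$: differentiating $\tfrac12\|x^a\partial_x^b y^\alpha\partial_y^\beta\scR\|_{L^2}^2$ in time and substituting \eqref{eq:lim}, the skew-Hermitian principal parts $i\partial_x\partial_y$ and $-iV'(x)y$ contribute nothing to the real part, so only commutator terms survive. These commutators produce strictly lower-order entries of the same family, each weighted by some $\|V^{(k)}\|_{L^\infty}$ with $k\le 4$, all finite by \eqref{hyp:V2} with $m=3$. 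Summing over all admissible multi-indices and applying Cauchy--Schwarz yields a scalar differential inequality $\tfrac{d}{dt}\cN_3\le C_3[V]\,\cN_3$, whence the exponential bound by Gronwall. Injecting it into the integral above produces exactly the factor $\frac{e^{C_3[V]t}-1}{C_3[V]}$ appearing in $\cC(t)$. The delicate point is the combinatorial accounting: one must check that $\partial_x^b$ falling on $V'(x)y$ never produces a derivative higher than $V^{(4)}$ (ensured by $b\le 3$), and that the commutators of $i\partial_x\partial_y$ with $x^a y^\alpha$ only reduce the total order, so every term appearing on the right-hand side is already controlled by $\cN_3$.
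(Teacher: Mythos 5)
Your proposal follows essentially the same route as the paper: the same decomposition of $R^\hbar-\scR$ with the full Weyl-variable generator as homogeneous part and $-i\,\cE^\hbar\scR$ as source, Duhamel plus the $L^2$-isometry, the third-order Taylor bound $|\cE^\hbar|\le\tfrac{\hbar^2}{24}|y|^3\|V^{(3)}\|_{L^\infty}$, and propagation of the weighted norm $\cN_3$ along \eqref{eq:lim} by commutator estimates and Gronwall (the paper runs this last step via Duhamel and the isometry of $\cS_0$ rather than a direct energy identity, which is equivalent). One small imprecision: the commutators do not produce \emph{strictly} lower-order terms --- e.g.\ $[x^a\partial_x^by^\alpha\partial_y^\beta,\partial_x\partial_y]$ yields $-a\,x^{a-1}\partial_x^by^\alpha\partial_y^{\beta+1}$ of the \emph{same} total order $n$, and the unbounded factor $V'(x)$ must be split as $V'(0)+O(|x|\,\|V''\|_{L^\infty})$, again giving order-$n$ terms --- but all of these are still dominated by $\cN_3(s)$, so your Gronwall closure and the final estimate stand.
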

{Let us first comment the assumption of Theorem \ref{th:01}. It is  possible
to reformulate it in terms of the more commonly used Schatten-class
norms for density matrix operators since we have in dimension $d\geq 1$,
$$
\hbar^d \,\|R\|^2_{L^2_{x,y}} \,=\,\|\rho\|^2_{L^2_{X,Y}} \,=\,\|\hat\rho\|^2_2.
$$
Furthermore, we have
$$
\left\{
  \begin{array}{l}
    \ds\partial_xR \;=\, (\partial_X+\partial_Y)\rho,
    \\[0.8em]
    \ds\partial_y R \;=\, \frac12\;\hbar\;(\partial_X-\partial_Y)\rho, 
\end{array}\right.
  $$
  which are  the (integral)  kernels of the operators
  $$
\left\{
  \begin{array}{l}
    \ds [\partial_X,\hat\rho] \\[0.8em]
    \ds \frac12\,\hbar\; [\partial_X,\hat\rho]_+\,,
  \end{array}\right.
$$
where we have used the following notation for the anticommutator of two operators $A$ and $B$
$$
[A,B]_+ \, :=\,AB\,+\;BA\,.
$$
Finally,  we have
$$
\left\{
  \begin{array}{l}
    \ds xR \,=\;\frac12\;(X+Y)\,\rho,
    \\[0.8em]
    \ds yR\,=\;\frac1\hbar\;(X-Y)\;\rho,
     \end{array}\right.
$$
    which are  the (integral) kernels of the operators
    $$
    \left\{
  \begin{array}{l}
    \ds\frac12\;[X,\hat\rho]_+,
    \\[0.8em]
    \ds\frac1\hbar \;[X,\hat\rho].
     \end{array}\right.
$$}
The proof of this theorem is presented in Section
\ref{sec:3}. Actually, it is relatively elementary since it is a
consequence of a simple Taylor expansion of the potential $V$, but the propagation of regularity of the solution to the
semiclassical limit problem \eqref{eq:lim} is required. This will be achieved by
studying the functional $\cN_m(t)$, defined in \eqref{def:Nm}, applied to the solution $\scR(t)$
to \eqref{eq:lim}. This result can be interpreted as a modulated
energy estimate where only regularity on the solution of the limit
equation  is required (see for instance \cite{BGL} or \cite{B}).

\section{Hermite Spectral method}
\label{sec:2}
\setcounter{equation}{0}
\setcounter{figure}{0}
\setcounter{table}{0}


The purpose of this section is to present a formulation of the von
Neumann equation \eqref{vNvW} written in Weyl's variables based on Hermite
polynomials. We first use Hermite polynomials in the $y$
variable and write the von Neumann  equation  \eqref{vNvW}  as an infinite hyperbolic system for the Hermite coefficients
depending only on time and space. The idea is to apply a Galerkin
method keeping only a small finite set of orthogonal
polynomials rather than discretizing the operator $R^\hbar$ in $y$ on a grid. The merit of using
an orthogonal basis like the so-called scaled Hermite basis has been
shown in \cite{Holloway1996, Schumer1998} and more recently in
\cite{bessemoulin2022cv,ref:5, bf1-2024,bf2-2024,b-2024, Filbet2020} for the Vlasov-Poisson system.
We define the  weight function $\omega$ as 
\begin{equation*}
\omega(y):=\pi^{-1/4}\,e^{-y^2/2}\,,\qquad y\in\R\,,
\end{equation*}
and the sequence of Hermite polynomials (called the ``physicist's Hermite
polynomials'') by
\begin{equation*}
H_k(y)\,:=\,(-1)^k\,\omega^{-2}(y)\,\partial_y^k\omega^2(y)\,,\qquad k\ge 0\,.
\end{equation*}

In this context the family of Hermite functions  $\left(\Phi_k\right)_{k\in\N}$ defined as
\begin{equation*}
\Phi_k(y)\,:=\,\frac1{\sqrt{2^kk!}}\,\omega(y)\, H_k(y)\,,\qquad k\ge 0
\end{equation*}
is a complete,  orthonormal system for the classical $L^2$ inner product, that is,
\[
\int_{\R}\,
\Phi_{k}(y)\,\Phi_{l}(y)\,\dD y
\,=\,
\delta_{k,l}\,, \quad{\rm and}\quad \overline{{\rm Span}\{\Phi_k, \quad k\geq 0\}}= L^2(\R).
\]
The sequence $\ds
\left(
H_{k}
\right)_{k\in\N} 
$ defined in \eqref{def:Hk} satisfies the following recursion relation
:
\begin{equation}
  \label{def:Hk}
  \left\{
    \begin{array}{l}
\ds H_{-1}=0, \quad H_{0}=1,
      \\[0.9em]
 \ds H_{k+1}(y)\,=\, 2\,y\,H_k(y) \,-\,  2\,k\,H_{k-1}(y)\,,\quad\, k\,\geq\,0\,.
\end{array}
      \right.
      \end{equation}
We now expand the solutions to the von Neumann and 
semiclassical limit equations  in terms of Hermite functions.

\subsection{Hermite approximation of  the von Neumann equation}
\label{sec:2.1}
 
We consider the decomposition of $R^\hbar$, solution to 
 \eqref{vNvW}, into its components 
$
R^\hbar\,=\,(R^\hbar_{k})_{k\in\N}
$
in the Hermite basis :
\begin{equation}
  \label{def:Rh}
R^\hbar\left(t|\,x,y\right)
\,=\,
\sum_{k\in\N}\,
R^\hbar_{k}
\left(t|\,x
\right)\,\Phi_{k}(y)\,,
\end{equation}
where
\begin{equation}
\label{def:Rhk}
R^\hbar_k(t|\,x) \,:=\,\int_{\R}R^\hbar(t|\,x,y)\,\Phi_k(y)\,\dD y\,,\quad k\in\N\,.
\end{equation}
Since the Hermite functions verify the following relations \cite{Herm} :
\begin{equation} 
 \label{2-bis}
\Phi_k'(y)\,=\, -\sqrt{\frac{k+1}{2}}\,\Phi_{k+1}(y) \,+\, \sqrt{\frac{k}{2}}\,\Phi_{k-1}(y) \,,\quad\forall\, k\,\geq\,0\,,
\end{equation}
and
\begin{equation}
  \label{yPhi}
y\,\Phi_k(y)\,=\, \sqrt{\frac{k+1}{2}}\,\Phi_{k+1}(y) \,+\, \sqrt{\frac{k}{2}}\,\Phi_{k-1}(y) \,,\quad\forall\, k\,\geq\,0\,,
\end{equation}
we have
\begin{equation*}
\int_{\R}\partial_yR^\hbar(t|\,x,y)\,\Phi_k(y)\,\dD
  y \,=\,\ds -\sqrt{\frac{k}2}\,R^\hbar_{k-1}(t|\,x)\,+\,\sqrt{\frac{k+1}2}\,R^\hbar_{k+1}(t|\,x)\,,
\end{equation*}
whereas
\begin{eqnarray*}
\int_{\R}R^\hbar(t|\,x,y)\,y\,\Phi_k(y)\,\dD
  y&=&\ds \sqrt{\frac{k}2}\,
       R^\hbar_{k-1}(t|\,x)\,+\,\sqrt{\frac{k+1}2}\,R^\hbar_{k+1}(t|\,x)\,,
\end{eqnarray*}
with $R^\hbar _{-1}=0$. Furthermore, the contribution of the potential $V$
in \eqref{def:Eh}, now becomes
$$
\int_{\R}\left\{\frac{V(x+\frac\hbar 2\, y)-V(x-\frac\hbar 2\, y)}{\hbar}
  - V^\prime(x)\,y\right\}\,
R^\hbar(t|\,x,y)\Phi_k(y)\,\dD y\,=\,\sum_{l\in\N}\cE^\hbar_{k,l}(x)\,R^\hbar_l(t|\,x)\,,
$$
with
\begin{equation}
\cE^\hbar_{k,l}(x\,):=\,\int_{\R}\cE^\hbar(x,y)\, \Phi_k(y)\Phi_l(y)\,\dD y\,,\qquad k,\,l\,\in\,\N\,,
\label{def:Ekl}
\end{equation}
where the function $\cE^\hbar(.,.)$ is defined in
\eqref{def:Eh}. Before  going further, let us first review some
properties of $\cE_{k,l}^\hbar$. Obviously $\cE^\hbar_{k,l}(x)\in\R$ and we
also have
\begin{equation}
   \label{Ekl:sym}
\cE_{k,l}^\hbar(x) \,=\, \cE_{l,k}^\hbar(x)\,,\quad x\in\R, \,\,k,l\geq 0\,. 
\end{equation}
Furthermore, using that  $H_k$ is proportional to the even
function $\omega^{-2}$ times the $k$-th derivative  of the even
function $\omega^2$,   it shows that
\begin{equation}
  \label{am:0}
H_k(y)\,=\,(-1)^k\, H_k(-y)
\end{equation}
and from \eqref{def:Eh},  we also get that
$$
\cE^\hbar(x,y)=\cE^\hbar(x,-y).
$$
Therefore,  applying the change of  variable
$y\mapsto -y$ in \eqref{def:Ekl}, we have 
\begin{equation}
  \label{toto:1}
\cE^\hbar_{k,l}(x) \,=\, -(-1)^{k+l}  \,\cE^\hbar_{k,l}(x)\,,\quad
x\in\R, \quad k,l\geq 0\,. 
\end{equation}
This latter property implies that $\cE^\hbar_{k,l}$ is identically $0$
when ever $k+l$ is even. We thus end up with the formulation of the von Neumann equation
written in the Hermite basis for all integers $k\geq 0$, 
  \begin{equation}
\label{def:eqRhk}
  \left\{
    \begin{array}{l}
  \ds\partial_t  R^\hbar_{k} \,+\, i\,\left(\sqrt{\frac k 2}\,
      \cD\,R^\hbar_{k-1}\,+ \, \sqrt{\frac{k +1}{2}}\,
      \cD^\star R^\hbar_{k+1}\right)   \,=\,  -\,i\,\sum_{l\geq 0} \cE^\hbar_{k,l}\,R^\hbar_l\,,   
        \\[1.2em]
         \ds R^\hbar_{k}(t=0|\,.) =  R^\hbar_{{\rm in},k}\,,
\end{array}\right.
\end{equation}
where $\cD$ and $\cD^{\star}$ are given by
\begin{equation*}
\left\{
\begin{array}{ll}
\ds  \cD \,u   &:=\,\ds +\partial_{x}  u \,+\, V^\prime(x)\, u\,, \\[1.1em]
\ds\cD^\star \,u  &:=\,\ds -\partial_{x}  u \,+\, V^\prime(x)\, u\,.
\end{array}\right.
\end{equation*}
Let us emphasize an important property satisfied by $\cD$, which
we will need to recover later on, in the discrete setting. The
operator $\cD^\star$ is its adjoint operator in $L^2(\R)$, meaning that for all $u$, $v\in {\rm Dom}(\cD)$, it holds
 \be 
 \label{prop1:A}
\left\langle\cD^\star u ,\, v\right\rangle = \left\langle u,\,\cD v\right\rangle,
\ee
 where $\langle.,\,.\rangle$ denotes the classical inner product \eqref{inner:p} in
 $L^2(\R)$.
 Then we define the (closed) linear space $\mP_N(\R)\subset L^2(\R)$ of dimension $N+1$ :
\begin{equation}
  \label{def:PN}
\mP_{N}(\R) \,:=\, {\rm Span}\{ \Phi_k, \quad k=0,\ldots,N\}\,, 
\end{equation}
and construct an approximation $R^{\hbar,N}=(R^{\hbar,N}_k)_{0\leq k\leq N}
\in \mP_{N}(\R)$,  solution to the
following system obtained after neglecting
 Hermite modes of order larger than $N$, that is,  for $0\leq k \leq {N}$ 
 \begin{equation}
  \label{Hermite:DN}
  \left\{
    \begin{array}{l}
  \ds\partial_t  R^{\hbar,N}_{k} \,+\, i\,\left(\sqrt{\frac k 2}\,
      \cD R^{\hbar,N}_{k-1}\,+ \, \sqrt{\frac{k +1}{2}}\,
      \cD^\star R^{\hbar,N}_{k+1}\right)   \,=\,  -\,i\,\sum_{l= 0}^{N}\cE^\hbar_{k,l}\,R^{\hbar,N}_l\,,   
        \\[1.2em]
         \ds R^{\hbar,N}_{k}(t=0|\,.) =  R^{\hbar}_{{\rm in},k}\,,
\end{array}\right.
\end{equation}
where $R^{\hbar,N}_{-1} = R^{\hbar,N}_{N+1}=0$.  The right-hand side
of \eqref{Hermite:DN} will later be designated as  ''the nonlocal
term in \eqref{Hermite:DN}''.  (this may be slightly improper since the left-hand side also involves $R^{\hbar,N}_{k\pm 1}$ which can be viewed as nonlocal in terms of $(R^{\hbar,N}_k)_{0\le k\le N}$)

We are now ready to present our second main result  on error estimates of between the
Hermite-Galerkin approximation and the solution to the von Neumann equation
\eqref{vNvW}. The use of Weyl variables allows us to obtain uniform
accuracy with respect to the parameter $\hbar$, ensuring naturally the
asymptotic preserving property of the Hermite approximation for the
von Neumann equation. Again  the key point here is to establish some  regularity
estimates on the solution to the von Neumann equation  written in
Weyl's variables. 

\begin{theorem}
  \label{th:errorvN}
  Let $p\geq 1$ and assume that $V$ satisfies
  \eqref{hyp:V1}-\eqref{hyp:V2} with $m=2(p+1)$, while  for all integers  $a$, $b$,
  $\alpha$ and $\beta$ such that  $a+b+\alpha+\beta\le m$, 
\begin{equation}
\label{hyp:R}
\| x^a\partial^b_xy^\alpha\partial^\beta_y R^\hbar_{\rm in} \|_{L^2}  \,<
\, \infty.
\end{equation}
Then  the solutions $R^\hbar$
to \eqref{vNvW} and  $R^{\hbar,N}$ to \eqref{Hermite:DN} satisfy
\begin{eqnarray*}
\| R^{\hbar,N}(t) - R^{\hbar}(t) \|_{L^2} & \leq &   \frac{K_p}{(2{N}+3)^p} \,
                                             e^{C_{2p}[V]\,t}\,\cN_{2p}(0)
  \\
  &+& \frac{\sqrt{C_p\,\max(1,|V'(0)|,\|V''\|_{L^\infty})}}{(2{N}+3)^{p-1/2}}\,\frac{e^{C_{2p+1}[V]t}-1}{C_{2p+1}[V]}
      \, \cN_{2p+1}(0)
    \\
                          &+&
      \frac{K_{p+1}\,\hbar^2}{4\,(2N+3)^{p-1/2}}\,\|V^{(3)}\|_{L^\infty}\, \,\frac{e^{C_{2p+2}[V]t}-1}{C_{2p+2}[V]}\, \cN_{2p+2}(0)\,,
\end{eqnarray*}
where $\cN_m(t)$ is defined in \eqref{def:Nm} and applied to
$R^\hbar$, whereas  $C_p$, $K_p$ and $K_{p+1}$ are constants depending only on $p$, while $C_{l}[V]$, for
$l=2p,\ldots, 2p+2$, is the constant defined later  in
Proposition \ref{prop:reg_R}.
\end{theorem}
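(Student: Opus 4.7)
The plan is to split the error by an orthogonality argument and then control the in-mode part by an $L^2$-energy estimate on the difference equation. Write $P_N$ for the orthogonal $L^2_y$-projector onto $\mP_N(\R)$ acting fiberwise in $y$. Since $R^{\hbar,N}(t)\in\mP_N(\R)$, Pythagoras gives
$$
\|R^{\hbar,N}(t)-R^\hbar(t)\|^2_{L^2} \,=\, \|R^{\hbar,N}(t)-P_NR^\hbar(t)\|^2_{L^2}\,+\,\|(I-P_N)R^\hbar(t)\|^2_{L^2}.
$$
The second summand is a pure spectral truncation error: Parseval in $y$ and the Hermite tail estimate $(2N+3)^{2p}\sum_{k>N}\|R^\hbar_k\|^2_{L^2_x}\le\sum_{k}(2k+1)^{2p}\|R^\hbar_k\|^2_{L^2_x}\le C\,\cN_{2p}(t)^2$, combined with the propagation of regularity $\cN_{2p}(t)\le e^{C_{2p}[V]t}\cN_{2p}(0)$ from Proposition \ref{prop:reg_R}, produces the first term of the bound.

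For the in-mode part I set $e_k := R^{\hbar,N}_k-R^\hbar_k$ for $0\le k\le N$. Subtracting \eqref{def:eqRhk} from \eqref{Hermite:DN}, each $e_k$ satisfies the same coupled system as $R^{\hbar,N}_k$, driven only by two residual forcings: the boundary-coupling term $F^{(1)}_k := i\sqrt{(N+1)/2}\,\cD^\star R^\hbar_{N+1}\,\delta_{k,N}$, coming from the missing mode $k=N+1$, and the nonlocal residual $F^{(2)}_k := -i\sum_{l>N}\cE^\hbar_{k,l}R^\hbar_l$. The initial data match so $e_k(0)=0$ for $0\le k\le N$. Pairing the equation with $\overline{e_k}$ in $L^2_x$, summing in $k$ and taking real parts, the adjoint relation \eqref{prop1:A} makes the coupling terms telescope to zero, and the symmetry \eqref{Ekl:sym} of $(\cE^\hbar_{k,l})$ makes the truncated nonlocal term purely imaginary. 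What remains is
$$
\frac{d}{dt}\|e(t)\|_{L^2}\,\le\,\sqrt{\tfrac{N+1}{2}}\|\cD^\star R^\hbar_{N+1}(t)\|_{L^2_x}\,+\,\Bigl(\sum_{k=0}^{N}\|F^{(2)}_k(t)\|^2_{L^2_x}\Bigr)^{1/2},
$$
so Gronwall reduces the task to estimating the time integrals of the two source terms.

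For the boundary-coupling source, using $|V'(x)|\le|V'(0)|+\|V''\|_{L^\infty}|x|$, I bound $\|\cD^\star R^\hbar_{N+1}\|_{L^2_x}$ by a constant times $\max(1,|V'(0)|,\|V''\|_{L^\infty})$ times the $L^2_x$-norms of $R^\hbar_{N+1}$, $\partial_x R^\hbar_{N+1}$ and $xR^\hbar_{N+1}$. Each of these is the $(N+1)$-th Hermite coefficient in $y$ of $R^\hbar$, $\partial_x R^\hbar$ or $xR^\hbar$; applying the Hermite tail estimate and noting that $\partial_x,x$ commute with $(y^2-\partial_y^2)^p$ yields an upper bound of the form $(2N+3)^{-p}\sqrt{C_p}\,\cN_{2p+1}(t)$. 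The prefactor $\sqrt{(N+1)/2}\asymp(2N+3)^{1/2}$ then promotes this to the announced $(2N+3)^{-(p-1/2)}$, and time-integration together with Proposition \ref{prop:reg_R} yields the second term. For the nonlocal residual, Parseval in $y$ gives
$$
\sum_{k\ge 0}\Bigl|\sum_{l>N}\cE^\hbar_{k,l}(x)R^\hbar_l(t|\,x)\Bigr|^2 \,=\, \|\cE^\hbar(x,\cdot)(I-P_N)R^\hbar(t|\,x,\cdot)\|^2_{L^2_y},
$$
and a third-order Taylor expansion of $V$ in the symmetric difference defining $\cE^\hbar$ provides the pointwise bound $|\cE^\hbar(x,y)|\le (\hbar^2/24)\,\|V^{(3)}\|_{L^\infty}\,|y|^3$. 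The extra factor $|y|^3$ is absorbed by three uses of the recursion \eqref{yPhi}, leading to $\|y^3(I-P_N)R^\hbar\|^2\le C\sum_{l>N}(2l+1)^3\|R^\hbar_l\|^2_{L^2_x}\le C(2N+3)^{-(2p-1)}\cN_{2p+2}(t)^2$. Taking square roots, multiplying by $\hbar^2\|V^{(3)}\|_{L^\infty}$ and time-integrating produces the third term.

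The main obstacle is the nonlocal residual: one has to convert the pointwise Taylor bound on $\cE^\hbar$ into a bound on $\sum_k\|F^{(2)}_k\|^2$ that simultaneously captures the $\hbar^2\|V^{(3)}\|_{L^\infty}$ prefactor, the precise power $(2N+3)^{-(p-1/2)}$, and the correct regularity class $\cN_{2p+2}$; the key step is the clean Hermite-weighted estimate for $\|y^3(I-P_N)R^\hbar\|$ above, which trades each factor of $y$ for a unit of Hermite mode weight while exploiting that $(I-P_N)R^\hbar$ is supported on modes $>N$. The structural cancellation in the energy estimate---adjointness of $\cD,\cD^\star$ and symmetry of $\cE^\hbar_{k,l}$---is what lets the argument close without losing any derivative on $e$, and the uniform-in-$\hbar$ propagation of the weighted norms $\cN_m(R^\hbar(t))$ provided by Proposition \ref{prop:reg_R} is the regularity input used throughout.
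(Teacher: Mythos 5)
Your proof is correct and follows essentially the same architecture as the paper's: an orthogonal splitting into projection error plus in-mode propagation error, an energy/Duhamel estimate on the truncated system in which adjointness of $\cD,\cD^\star$ and symmetry of $(\cE^\hbar_{k,l})$ kill the non-source terms, the Hermite tail estimate of Lemma \ref{lem:4}, and the regularity propagation of Proposition \ref{prop:reg_R}. The only substantive difference is in the nonlocal residual: the paper applies Cauchy--Schwarz to the bilinear form $\cI_N(t)$ so that the weight $|y|^3$ from $|\cE^\hbar|\le \tfrac{\hbar^2}{24}\|V^{(3)}\|_{L^\infty}|y|^3$ lands on the in-mode factor $\cP_NR^\hbar-R^{\hbar,N}$ (costing a factor $(N+9)^{3/2}$, since $y^3$ maps modes $\le N$ into modes $\le N+3$) while the full decay $(2N+3)^{-(p+1)}$ is extracted from the tail $(I-\cP_N)R^\hbar$; you instead place both the weight and the decay on the tail via $\|y^3(I-\cP_N)R^\hbar\|\lesssim(2N+3)^{-(p-1/2)}\cN_{2p+2}$. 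The two bookkeepings give the same rate $(2N+3)^{-(p-1/2)}$ and the same regularity class $\cN_{2p+2}$, and yours is marginally more direct since it avoids the explicit computation of the coefficients $\alpha_k,\beta_k,\gamma_k,\tau_k$ on the in-mode factor.
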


This result is particularly remarkable because, on the one hand, it
provides spectral accuracy, that is, the convergence rate in $N$
depends only on the regularity of the solution to the continuous
problem \eqref{vNvW}. On the other hand, these estimates are uniform
with respect to the parameter $\hbar \ll 1$, which guarantees a
uniform accuracy of the numerical approximation for different physical
regimes. Thus we can guarantee the asymptotic preservation property in the limit where $\hbar$ tends to $0$.

The proof is split in two steps. First, we  establish  stability
estimates  for the $L^2$ norm of $R^{\hbar,N}$. In other words we obtain a
uniform in $\hbar$ error estimate for the approximation of the
solution of the von Neumann equation (in Weyl's variables) by the
Hermite Galerkin system \eqref{Hermite:DN}. Then we prove the propagation of regularity of
the solution to the von Neumann equation \eqref{vNvW}  written in
Weyl's variables.

 \subsection{Hermite approximation of  the semiclassical limit   equation}
 \label{sec:2.2}
 
 As we have seen before, the formulation \eqref{Hermite:DN} is well adapted to the semiclassical limit
as $\hbar\rightarrow 0 .$ Indeed, the Hermite  formulation of the
semiclassical limit equation \eqref{eq:lim} is simply obtained by neglecting the right-hand side term in
\eqref{Hermite:DN}. Thus,  we construct an approximation
$\scR^N=(\scR_k^N)_{0\leq k\leq  N}$, which satisfies, for all $k\in\{0,\ldots,N\}$,
\begin{equation}
  \label{Hermite:limN}
    \left\{
    \begin{array}{l}
  \ds\partial_t  \scR_{k}^N \,+\, i\,\left(\sqrt{\frac k 2}\,
      \cD\scR_{k-1}^N\,+\, \sqrt{\frac{k +1}{2}}\,
      \cD^\star \scR_{k+1}^N\right)   \,=\,  0\,,   
        \\[1.2em]
      \ds \scR_{k}^N(t=0 |\,.) \,=\,  \scR_{{\rm in},k}\,.
      
\end{array}\right.
\end{equation}

Following the same strategy as the one applied for the von Neumann
equation, we provide error estimates of the Hermite-Galerkin
method  for smooth
solutions of the semiclassical limit equation \eqref{eq:lim}. Hence, using the propagation of regularity, we prove the following result.

\begin{theorem}
  \label{th:errorSC}
  Let $p\geq 1$ and assume that  $V$ satisfies
  \eqref{hyp:V1}-\eqref{hyp:V2} with $m=2p+1$ while for all integers  $a$, $b$,
$\alpha$ and $\beta$ such that  $a+b+\alpha+\beta\le m$,  the initial data $\scR_{\rm in}$ satisfies
\begin{equation}
\label{hyp:Rhat}
\| x^a\partial^b_xy^\alpha\partial^\beta_y \scR_{\rm in}\|_{L^2}  \,<
\, \infty.
\end{equation}
Then  the solutions $\scR$
to \eqref{eq:lim} and  $\scR^{N}$ to \eqref{Hermite:limN} satisfy
\begin{eqnarray*}
&& \|\scR(t)\,-\,\scR^{N}(t)\|_{L^2} \,\leq\, \frac{K_p\,e^{C_{2p}[V]t}}{(2{N}+3)^p}\,\cN_{2p}(0)
\\[0.9em]
                                      &&+
                                          \frac{\sqrt{C_p\max(1,|V'(0)|,\|V''\|_{L^\infty})}}{(2{N}+3)^{p-1/2}}\,\frac{e^{C_{2p+1}[V]t}-1}{C_{2p+1}[V]}\,\cN_{2p+1}(0)\,,
\end{eqnarray*}
where $\cN_m(t)$ is defined in \eqref{def:Nm} and applied to
$\scR$, whereas  $C_p$ and $K_p$ are constants depending only on $p$, while the constants
$C_{l}[V]$, for $l=2p$ and $2p+1$ are given later in \eqref{def:Cm}.
\end{theorem}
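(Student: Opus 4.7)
The plan is to split the error via the Hermite truncation projector. Writing $\scR(t|x,y)=\sum_{k\ge 0}\scR_k(t|x)\Phi_k(y)$ with $\scR_k(t|x):=\int_\R\scR(t|x,y)\Phi_k(y)\,\dD y$, and letting $\Pi_N\scR:=\sum_{k=0}^N\scR_k\Phi_k$ denote the Galerkin projector, the triangle inequality
$$
\|\scR(t)-\scR^N(t)\|_{L^2}\,\le\,\|\scR(t)-\Pi_N\scR(t)\|_{L^2}\,+\,\|\Pi_N\scR(t)-\scR^N(t)\|_{L^2}
$$
isolates a purely spectral \emph{projection error} and a \emph{consistency error} between two finite Hermite systems. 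Each will match one of the two contributions in the statement.

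For the projection error, I would exploit that $\Phi_k$ is an eigenfunction of $-\partial_y^2+y^2$ with eigenvalue $2k+1$, so integration by parts gives $(2k+1)^p\scR_k(t|x)=\int_\R(-\partial_y^2+y^2)^p\scR(t|x,y)\,\Phi_k(y)\,\dD y$. Applying Parseval in $y$ and integrating in $x$ yields
$$
\|\scR-\Pi_N\scR\|_{L^2}^2\,=\,\sum_{k>N}\|\scR_k\|_{L^2_x}^2\,\le\,(2N+3)^{-2p}\,\|(-\partial_y^2+y^2)^p\scR\|_{L^2_{x,y}}^2\,,
$$
and expanding $(-\partial_y^2+y^2)^p$ in monomials $y^a\partial_y^b$ with $a+b\le 2p$ bounds the right-hand side by $K_p\,\cN_{2p}(t)/(2N+3)^p$. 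Combined with the propagation-of-regularity estimate $\cN_{2p}(t)\le e^{C_{2p}[V]t}\cN_{2p}(0)$ along \eqref{eq:lim}, this produces the first term.

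For the consistency error, set $\epsilon^N_k:=\scR_k-\scR^N_k$ for $0\le k\le N$. Comparing \eqref{eq:lim} with \eqref{Hermite:limN} shows that $\epsilon^N$ satisfies exactly \eqref{Hermite:limN} except at $k=N$, where a source $-i\sqrt{(N+1)/2}\,\cD^\star\scR_{N+1}$ appears. The key feature of the $L^2$ energy estimate is that, by the adjointness relation \eqref{prop1:A}, the interior couplings between $\cD\epsilon^N_{k-1}$ and $\cD^\star\epsilon^N_{k+1}$ combine into purely imaginary contributions and cancel, leaving
$$
\frac12\,\frac{\dD}{\dD t}\|\epsilon^N\|_{L^2}^2\,=\,-\,\mathrm{Re}\,\bigl\langle i\sqrt{(N+1)/2}\,\cD^\star\scR_{N+1},\,\epsilon^N_N\bigr\rangle\,,
$$
so Cauchy--Schwarz gives $\frac{\dD}{\dD t}\|\epsilon^N\|_{L^2}\le\sqrt{(N+1)/2}\,\|\cD^\star\scR_{N+1}\|_{L^2_x}$. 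Writing $\cD^\star\scR_{N+1}=-\partial_x\scR_{N+1}+V'(x)\scR_{N+1}$ and using $|V'(x)|\le|V'(0)|+\|V''\|_{L^\infty}|x|$, the three resulting terms involve the $(N+1)$-th Hermite coefficient of $\scR$, $\partial_x\scR$ and $x\scR$ respectively; the harmonic-oscillator decay of the previous paragraph applied to each produces a factor $(2N+3)^{-p}$ together with a $\cN_{2p+1}(t)$ factor. Combined with $\sqrt{(N+1)/2}\,(2N+3)^{-p}\le C\,(2N+3)^{-(p-1/2)}$, integrating in time and invoking $\cN_{2p+1}(t)\le e^{C_{2p+1}[V]t}\cN_{2p+1}(0)$ yields the second term.

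The hard part will be the propagation-of-regularity bound for $\cN_m(\scR(t))$ on which both steps rely. Differentiating $\cN_m$ along \eqref{eq:lim} and commuting the weights $x^a\partial_x^by^\alpha\partial_y^\beta$ with the semiclassical operator $i\partial_x\partial_y-iV'(x)\cdot y$ generates a cascade of lower-order commutators indexed by $a+b+\alpha+\beta\le m$, involving derivatives of $V$ up to order $m+1$; closing this hierarchy into a single Grönwall-type differential inequality for $\cN_m$, and extracting the exponential constant $C_m[V]$, is the delicate combinatorial step for which the regularity hypotheses \eqref{hyp:V1}--\eqref{hyp:V2} with $m=2p+1$ are exactly what is needed.
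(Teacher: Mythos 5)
Your proposal follows essentially the same route as the paper: the same splitting into projection error plus Galerkin propagation error, the same spectral decay estimate based on the harmonic-oscillator identity $(y^2-\partial_y^2)\Phi_k=(2k+1)\Phi_k$, the same identification of the single source term $\sqrt{(N+1)/2}\,\cD^\star\scR_{N+1}$ at mode $N$, and the same reliance on the propagation of $\cN_m$ along the semiclassical flow (the paper's Proposition~\ref{prop:reg_Rhat}). The only cosmetic difference is that you run a direct $L^2$ energy estimate on the error where the paper invokes the Duhamel formula together with the isometry of $\cS_0(t)$; the cancellation you describe via the adjointness relation \eqref{prop1:A} is exactly the one the paper establishes in Proposition~\ref{prop:L2-2}.
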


Let us emphasize that this result requires slightly less
regularity on the initial data than the one  in Theorem
\ref{th:errorvN}. This is due to the absence of non-local in $k$ terms
 in the right-hand side of \eqref{Hermite:limN}, at variance with
 \eqref{Hermite:DN}.


{In the following diagram, we present a synthetic overview of the
 preceding results concerning the semiclassical limit, as well as the
 convergence results of the spectral method based on the Hermite
 expansion. It is important to note here that the use of Weyl
 variables allows us to commute the limits $N$ tends to infinity and
 $\hbar$ tends to $0$ (Theorem \ref{th:errorvN}). }

 \begin{center}
\begin{tikzpicture}
	\matrix(m)[matrix of math nodes, row sep=7em, column sep=10em,text height=1.5ex, text depth=0.7ex]
	{
	 R^{\hbar}  \textrm{ solution to \eqref{vNvW}}          &  R   \textrm{ solution to \eqref{eq:lim}}
        \\
	R^{\hbar,N} \textrm{ solution to \eqref{def:eqRhk}}  & R^{N} \textrm{ solution to \eqref{Hermite:limN}} \\
	};
	\path[->]
	(m-1-1) edge node[above] { $\hbar\rightarrow 0$ (Theorem \ref{th:01}) } (m-1-2);
 	\path[->]
	(m-2-1) edge node[left,align=right]{$\,N\rightarrow+\infty$ and
          \\ uniformly in $\hbar>0$ \\  \textrm{(Theorem \ref{th:errorvN})} } (m-1-1);
	\path[->]
	(m-2-2) edge node[right, align=left] {$\,N\rightarrow+\infty$ \\  \textrm{(Theorem \ref{th:errorSC})}  } (m-1-2);
\end{tikzpicture}
\end{center}

 In order to offer a smooth presentation, we will
first present the proof of  Theorem \ref{th:errorSC}) in Section
\ref{sec:3} since it requires less tedious calculations
and then we will focus on the estimate of the  non-local term in the
right-hand side of \eqref{Hermite:DN} to prove Theorem
\ref{th:errorvN} in  Section \ref{sec:4}.

It is worth mentioning that the proof of these three results follows
the lines of the famous Lax theorem in numerical analysis
\cite{lax1, lax2}, since we make use of the regularity of the solution to
get an error estimate. A similar approach, either called the ``relative entropy
method'' or the ``modulated energy method'',  has also been widely used in
kinetic theory to compare the solutions of a kinetic and fluid
equations \cite{BGL} or \cite{B}.  In this work, we treat
simultaneously the error estimates with respect to the physical
parameter $\hbar$ and the discrete parameter $N$. 

{It is important to emphasize that the use of a Hermite expansion is
not essential to obtain uniform estimates with respect to
$\hbar$ leading to to the asymptotic preserving  property, since
these estimates follow from the reformulation in Weyl's
variables. Conversely, employing Hermite functions enables us to
achieve spectral accuracy, for which the order of accuracy only
depends on the regularity of the solution and is not fixed {\it a priori}
by the discretization method. Therefore, other types of numerical
approximations (such as Fourier methods \cite{berman1992solution, xiong2016advective}, finite differences, etc.)
could also be considered.}

\subsection{Preliminary results}
\label{sec:2.3}
Before  starting our analysis on the discretization of the von Neumann
equation using Hermite functions, let us briefly recall some basic
properties on spectral methods.

Consider  $G \in L^2(\R)$ written  in the Hermite basis
\begin{equation*}
G(y)
\,=\,
\sum_{k\in\N}\,
 G_{k}\,\Phi_{k}(y)\,,
\end{equation*}
where
$$
G_k \,:=\,\int_{\R} G(y)\,\Phi_k(y)\,\dD y\,,\quad k\ge 0\,.
$$
The Hermite-Galerkin method consists in truncating the series, keeping 
only  the $N$ first coefficients $G_k$ in the expansion
of the  function $G$ on the  basis $(\Phi_k)_{k\in\N}$ of Hermite  functions.
Thus,  let $\cP_{N}$ be the orthogonal projection from $L^2(\R)$ to
$\mP_{N}(\R)$, where $\mP_N(\R)$ is defined in \eqref{def:PN}. Let us first evaluate the ''error'' of the projection operator $\cP_{N}:
L^2(\R)\to \mP_{N}(\R)$ when applied to smooth functions.
\begin{lemma}
  \label{lem:4}
  Let  $G\in L^2(\R)$ satisfy, for some integer $p\ge 0$,
  $$
\|y^{2p}\, G\|_{L^2} + \| G\|_{H^{2p}} \,< \,\infty.
  $$
Then, we have
$$
\|G-\cP_{N}G
\|_{L^2}\,\le\,\frac1{(2N+3)^{p}}\left(\int_{\R}\left|\left(y^2-\frac{\dD^2}{\dD y^2}\right)^p\,G(y)\right|^2 \dD y\right)^{1/2}\,.
$$
\end{lemma}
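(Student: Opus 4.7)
The plan is to exploit the fact that the Hermite functions $\Phi_k$ are the eigenfunctions of the quantum harmonic oscillator operator
$$
\cL \,:=\, y^2 \,-\, \frac{\dD^2}{\dD y^2}
$$
with eigenvalues $\lambda_k = 2k+1$; this is a classical identity that follows from combining the recursions \eqref{2-bis} and \eqref{yPhi}. In particular, the operator $\cL$ is diagonal in the Hermite basis, and so is every power $\cL^p$.

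First, I would use Parseval's identity in the orthonormal basis $(\Phi_k)_{k\in\N}$ to write
$$
\|G - \cP_N G\|_{L^2}^2 \,=\, \sum_{k\ge N+1} |G_k|^2\,,
$$
together with the spectral identity
$$
\|\cL^p G\|_{L^2}^2 \,=\, \sum_{k\in\N} (2k+1)^{2p} |G_k|^2\,.
$$
The hypothesis $\|y^{2p}G\|_{L^2} + \|G\|_{H^{2p}}<\infty$ ensures that $\cL^p G \in L^2(\R)$: indeed, expanding $(y^2-\partial_y^2)^p$ and applying the Leibniz rule, each of the resulting terms involves at most $2p$ derivatives and at most $y^{2p}$ powers, so $\cL^p G$ can be controlled by the two quantities in the assumption (via intermediate estimates on mixed terms, which follow by interpolation or direct integration by parts).

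Next, since for every $k\ge N+1$ one has $2k+1 \ge 2N+3$, I would bound
$$
\|\cL^p G\|_{L^2}^2 \,\ge\, \sum_{k\ge N+1} (2k+1)^{2p} |G_k|^2 \,\ge\, (2N+3)^{2p} \sum_{k\ge N+1} |G_k|^2 \,=\, (2N+3)^{2p}\,\|G-\cP_N G\|^2_{L^2}\,,
$$
and taking square roots yields exactly the claimed estimate. The only technical subtlety — and the step most worth writing out carefully — is the justification that $\cL^p G \in L^2(\R)$ under the stated assumptions, so that the series manipulations and the Parseval identity for $\cL^p G$ are legitimate; everything else is a one-line spectral comparison.
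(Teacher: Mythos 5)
Your proposal is correct and follows essentially the same route as the paper: Parseval in the Hermite basis, the eigenvalue identity $\left(y^2-\frac{\dD^2}{\dD y^2}\right)^p\Phi_k=(2k+1)^p\Phi_k$ (equivalently $(\cL^p G)_k=(2k+1)^pG_k$ by self-adjointness), and the tail comparison $2k+1\ge 2N+3$ for $k>N$. The only difference is that you flag and discuss the justification that $\cL^pG\in L^2$ under the stated hypotheses, a point the paper's proof leaves implicit.
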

\begin{proof}
On the one hand, since the Hermite functions is a complete family of orthonormal
functions in $L^2(\R)$, we deduce from the Parseval equality that
$$
\|G-\cP_{N}G \|^2_{L^2}\,=\,\sum_{k>{N}}|G_k|^2\,.
$$
On the other hand, for each integer $p\geq 0$, we  apply again the  Parseval equality
to find that
$$
\int_\R\left|\left(y^2-\frac{\dD^2}{\dD y^2}\right)^p \,G \right|^2\dD
y\,=\,\sum_{k\ge 0}\left|\left(\left(y^2-\frac{\dD^2}{\dD y^2}\right)^p\,G\right)_k\right|^2
$$
and observe that 
$$
\frac12\,\left(y^2-1-\frac{\dD^2 }{\dD y^2}\right)\,\Phi_k(y)\,=\,k\,\Phi_k(y)\,,\quad k\ge 0\,.
$$
Hence,
$$
\left(y^2-\frac{\dD^2 }{\dD y^2}\right)^p\,\Phi_k(y)\,=\,(2k+1)^p\,\Phi_k(y)\,,\quad k\ge 0\,,
$$
and for all integer $p\geq 0$, we find that 
$$
\left(\left(y^2-\frac{\dD^2 }{\dD y^2}\right)^p G\right)_k
\,=\,\int_\R G(y)\,\left(y^2-\frac{\dD^2}{\dD y^2}\right)^p\Phi_k(y)\dD y\,=\,(2k+1)^p\,G_k\,.
$$
Therefore,
$$
\int_{\R}\left|\left(y^2-\frac{\dD^2}{\dD y^2}\right)^p G(y)\right|^2\dD
y\,=\,\sum_{k\ge 0}(2k+1)^{2p}\,|G_k|^2\,.
$$
This leads to
\begin{eqnarray*}
\|G\,-\,\cP_{N} G \|^2_{L^2}&=&\sum_{k>N}|G_k|^2
  \\
  &\leq&  \frac1{(2{N}+3)^{2p}}\,\sum_{k>{N}}(2k+1)^{2p}|G_k|^2
\\
&\le& \frac1{(2{N}+3)^{2p}}\,\int_{\R}\left|\left(y^2-\frac{\dD^2}{\dD
      y^2}\right)^p G(y)\right|^2\dD y\,,
\end{eqnarray*}
which is the sought inequality.
\end{proof}

For sake of simplicity we first present our analysis on the
semiclassical model, where the non-local term is neglected. Then in
the second part, we treat the von Neumann equation and focus on the
contribution of the non-local term $\cE^\hbar$.

\section{Analysis of the semiclassical model}
\label{sec:3}
\setcounter{equation}{0}
\setcounter{figure}{0}
\setcounter{table}{0}

This section is mainly devoted to the propagation of regularity
on the solution to the semiclassical limit equation \eqref{eq:lim}.  We first study  the semiclassical
limit equation  in order to present the basic ideas. Then,  we provide
an error estimate with respect to $\hbar$ between $R^\hbar$ and $\scR$
when the solution to the semiclassical limit equation $\scR$ is
regular enough. Finally, we give an error estimate between the solution
$\scR$ to \eqref{eq:lim} and its approximation $R^N$ using the
Hermite-Galerkin method \eqref{Hermite:limN}.

\subsection{Propagation of regularity for the semiclassical limit equation}
\label{sec:3.1}


In this section we consider $\scR$, the solution to the
semiclassical limit equation \eqref{eq:lim}. We first recall  the
conservation of $L^2$ norm, and then prove the propagation of
regularity leading to the following result. Observe that our
assumptions on the smooth potential $V$ include the harmonic oscillator for which $V(x)=x^2$.

\begin{proposition}
  \label{prop:reg_Rhat}
  Let $m\geq 0$  and let $V$ be a potential satisfying
  \eqref{hyp:V1}, \eqref{hyp:V2} , and assume that, for each integer
  $a$, $b$, $\alpha$ and $\beta$ such that  $a+b+\alpha+\beta\le m$,
  the condition \eqref{hyp:Rhat}  on the initial data $\scR_{\rm in}$ is satisfied. Then the solution
$\scR$ to \eqref{eq:lim}  with $d=1$ satisfies,  for all  $t\ge 0$,
\begin{equation*}
\cN_m(t)
\,\le\, \cN_m(0)  \, e^{C_m[V]\,t}\,,
\end{equation*}
where $\cN_m$ is defined in  \eqref{def:Nm}  and $C_m[V]>0$ is given by
\begin{equation}
  \label{def:Cm}
C_m[V]\,:=\,\left(m^2+2m+m\|V''\|_{L^\infty}\,+\,m\,|V'(0)|\right)\,+\,2^m\,(1+m)\,m\,\max_{2\le n\le m+1}\|V^{(n)}\|_{L^\infty}\,.
\end{equation}
\end{proposition}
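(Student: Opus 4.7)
The plan is to run a weighted energy estimate directly on the equation, applying each differential operator $T=x^a\partial_x^b y^\alpha\partial_y^\beta$ with $a+b+\alpha+\beta\le m$ to $\scR$, computing $\frac{d}{dt}\|T\scR\|_{L^2}^2$, and showing that the only terms surviving are commutator terms that are themselves controlled by $\cN_m(t)$. A Gronwall argument then closes the estimate. Because $\scR_{\rm in}$ may fail to be smooth in the classical sense, I would carry out the computation on a mollified initial datum, obtain the a priori bound with the constant $C_m[V]$ independent of the mollification, and pass to the limit.

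The key algebraic step is the following. Applying $T$ to \eqref{eq:lim} and using that $\partial_x\partial_y$ is skew-adjoint on $L^2$ and $V'(x)y$ is real-valued, the principal terms $i\partial_x\partial_y(T\scR)$ and $-iV'(x)y(T\scR)$ contribute $0$ to $\frac{d}{dt}\|T\scR\|_{L^2}^2$, leaving
\[
\tfrac{d}{dt}\|T\scR\|_{L^2}^2
\,=\,2\,\Re\bigl\langle T\scR,\,i[T,\partial_x\partial_y]\scR\,-\,i[T,V'(x)y]\scR\bigr\rangle\,.
\]
So everything reduces to estimating the two commutators in $L^2$. For the first, since $\partial_x$ commutes with $y^\alpha\partial_y^\beta$ and $\partial_y$ commutes with $x^a\partial_x^b$, a direct computation using $[\partial_x,x^a]=ax^{a-1}$ and $[\partial_y,y^\alpha]=\alpha y^{\alpha-1}$ yields
\[
[T,\partial_x\partial_y]\,=\,-\alpha\,x^a\partial_x^{b+1}y^{\alpha-1}\partial_y^\beta\,-\,a\,x^{a-1}\partial_x^b y^\alpha\partial_y^{\beta+1}\,-\,a\alpha\,x^{a-1}\partial_x^b y^{\alpha-1}\partial_y^\beta\,,
\]
each summand having total order $\le m$; the coefficients contribute at most $\alpha+a+a\alpha\le m^2+2m$ to $C_m[V]$.

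For the second commutator, write $[T,V'(x)y]=x^a\partial_x^b[y^\alpha\partial_y^\beta,V'(x)y]+[x^a\partial_x^b,V'(x)y]y^\alpha\partial_y^\beta$. Using $[\partial_y^\beta,y]=\beta\partial_y^{\beta-1}$ and Leibniz $\partial_x^b(V'\cdot)=\sum_{k=0}^b\binom{b}{k}V^{(k+1)}\partial_x^{b-k}$, the resulting terms split into two groups. The terms containing a factor $V^{(n)}$ with $n\ge 2$ are uniformly bounded, producing a contribution $\le 2^m(1+m)m\max_{2\le n\le m+1}\|V^{(n)}\|_{L^\infty}\cN_m(t)$ after summing the binomial coefficients. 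The single leading term, of the form $\beta\,x^a V'(x)\partial_x^b y^\alpha\partial_y^{\beta-1}\scR$, involves the unbounded factor $V'(x)$: here I would use the pointwise bound $|V'(x)|\le |V'(0)|+\|V''\|_{L^\infty}|x|$ so that this term is controlled in $L^2$ by $\beta\,|V'(0)|$ times $\|x^a\partial_x^b y^\alpha\partial_y^{\beta-1}\scR\|_{L^2}$ plus $\beta\,\|V''\|_{L^\infty}$ times $\|x^{a+1}\partial_x^b y^\alpha\partial_y^{\beta-1}\scR\|_{L^2}$, both of total order $\le m$ hence controlled by $\cN_m(t)$, and contributing $m|V'(0)|+m\|V''\|_{L^\infty}$ to $C_m[V]$.

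Combining via Cauchy–Schwarz, $\frac{d}{dt}\|T\scR\|_{L^2}\le C_T[V]\,\cN_m(t)$; summing over all $(a,b,\alpha,\beta)$ with $a+b+\alpha+\beta\le m$ gives $\frac{d}{dt}\cN_m(t)\le C_m[V]\,\cN_m(t)$ with exactly the constant \eqref{def:Cm}, and Gronwall concludes. The main obstacle is the careful bookkeeping to keep all emerging terms within total order $m$ and to recover the stated constant; in particular, dealing with the unbounded factor $V'(x)$ in the leading commutator requires trading the factor $|x|$ against the lowering of the $\partial_y$-order obtained from $[\partial_y^\beta,y]$, which works precisely because the Weyl-variable form of the equation pairs a $y$-multiplication with $V'(x)$.
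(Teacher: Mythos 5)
Your proposal is correct and follows essentially the same route as the paper: the same commutator computation for $[x^a\partial_x^b y^\alpha\partial_y^\beta,\,\partial_x\partial_y-V'(x)y]$, the same use of $|V'(x)|\le|V'(0)|+\|V''\|_{L^\infty}|x|$ to trade the unbounded factor against an admissible power of $x$, and the same Gronwall closure. The only (cosmetic) difference is that you obtain the integral inequality via a direct energy estimate $\frac{d}{dt}\|T\scR\|_{L^2}^2$ using skew-adjointness, whereas the paper writes Duhamel's formula and invokes the $L^2$-isometry of the group $\cS_0(t)$; both exploit exactly the same cancellation of the principal part.
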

\begin{proof}
  First, observing that for all $t\geq 0$
  \begin{equation}
  \label{ref:31}
\| \scR(t) \|_{L^2}^2 \,=\, \int_{\R^2} |\scR (t|\,x,y)|^2 \dD
x\,\dD y \,=\, \| \cS_0(t)\,\scR_{\rm in} \|_{L^2}^2 \,=\, \| \scR_{\rm in} \|_{L^2}^2,
  \end{equation}
where $\cS_0(t)$ represents the one-parameter group \eqref{def:S0},  corresponding to the semiclassical limit equation
\eqref{eq:lim}, for which the $L^2$ norm is preserved by the evolution.
\\
Now, for each integer $a$, $b$, $\alpha$ and $\beta\geq 0$,
applying $x^a\,\partial_x^by^\alpha\partial_y^\beta$ to both sides of  \eqref{eq:lim}, we have
\begin{equation}
  \label{my:num1}
\begin{aligned}
\ds \partial_t\left(x^a\,\partial_x^by^\alpha\partial_y^\beta\scR\right) & \,=\, \left(i\,\partial_x\partial_y-i\,V'(x)y\right)\,\left(x^a\partial_x^by^\alpha\partial_\xi^\beta\scR\right)
\\
& \,+\,i\,\left[x^a\partial_x^by^\alpha\partial_y^\beta,\,\partial_x\partial_y-V'(x)\,y\right]\,\scR\,.
\end{aligned}
\end{equation}
On the one hand, computing the last term of the right-hand side in the
former equality and using that  $\left[z^\gamma,\,\partial_z\right]=
-\gamma\,z^{\gamma-1}$ for $z=x$ (resp. $y$) and $\gamma=\alpha$
(resp. $\beta$), we find
\begin{eqnarray*}
\left[ x^a\partial_x^by^\alpha\partial_y^\beta,\,\partial_x\partial_y-V'(x)\,y\right]
&=& \left[x^a\partial_x^by^\alpha\partial_y^\beta,\,\partial_x\partial_y\right]\,-\,\left[x^a\partial_x^by^\alpha\partial_y^\beta,\,V'(x)y\right]
\\
&=&\left [x^a,\,\partial_x\right]\,\partial_x^by^\alpha\partial_y^{\beta+1}\,+\,\partial_xx^a\partial_x^b\,\left[y^\alpha,\,\partial_y\right]\,\partial_y^\beta
\\
&-&x^a\,\left[\partial_x^b,\,V'(x)\right]\,y^\alpha\partial_y^\beta y\,-\,V'(x)\,x^a\partial_x^b y^\alpha\,\left[\partial_y^\beta,\,y\right].
\end{eqnarray*}
Then, we have 
$$
\begin{aligned}
\left[ x^a\partial_x^by^\alpha\partial_y^\beta,\,\partial_x\partial_y-V'(x)\,y\right]
\,=\, &- a\,x^{a-1}\,\partial_x^by^\alpha\partial_y^{\beta+1}\,-\,\alpha\,\partial_xx^a\partial_x^by^{\alpha-1}\,\partial_y^\beta
\\
&\,-\,\beta \,V'(x)\,x^a\,\partial_x^b y^\alpha\,\partial_y^{\beta-1}\,-\,x^a\left[\partial_x^b\,,V'(x)\right]\,y^\alpha\,\partial_y^\beta y
\\
\,=\,&-a\,x^{a-1}\,\partial_x^by^\alpha\partial_y^{\beta+1}\,-\,\alpha\, x^a\,\partial_x^{b+1}\,y^{\alpha-1}\partial_y^\beta
\\
&\,-\,\alpha \,a\,x^{a-1}\,\partial_x^by^{\alpha-1}\partial_y^\beta\,-\,\beta\, V'(x)\,x^a\,\partial_x^b \,y^\alpha\,\partial_y^{\beta-1}
\\
&\,-\,x^a\,\left[\partial_x^b,\,V'(x)\right]\,y^{\alpha+1}\,\partial_y^\beta\,-\,\beta\, x^a\,\left[\partial_x^b,\,V'(x)\right]\,y^\alpha\,\partial_y^{\beta-1}\,.
\end{aligned}
$$
Finally, applying the Leibniz formula
$$
\left[\partial_x^b,\,V'(x)\right]\,=\,\sum_{j=1}^b{b\choose j}\,V^{(j+1)}(x)\,\partial_x^{b-j}\,,
$$
we get
\begin{equation}
\begin{aligned}
-\left[x^a\,\partial_x^by^\alpha\partial_y^\beta,\,\partial_x\partial_y-V'(x)\,y\right] \,=\,& a\,x^{a-1}\,\partial_x^by^\alpha\partial_y^{\beta+1}\,+\,\alpha \,x^a\,\partial_x^{b+1}y^{\alpha-1}\partial_y^\beta
\\
& \,+\,\alpha \,a\,x^{a-1}\,\partial_x^by^{\alpha-1}\partial_y^\beta\,+\,\beta\, V'(x)\,x^a\,\partial_x^b y^\alpha\partial_y^{\beta-1}
\\
& \,+\,\sum_{j=1}^b{b\choose j}\,x^a\,V^{(j+1)}(x)\,\partial_x^{b-j}(y^{\alpha+1}\partial_y^\beta\,+\,\beta y^\alpha\partial_y^{\beta-1})\,.
\end{aligned}
\label{my:num2}
\end{equation}
On the other hand, we deduce from \eqref{my:num1} and the
Duhamel formula that 
$$
\begin{aligned}
x^a\,\partial_x^by^\alpha\partial_y^\beta\scR(t|\,x,y) &\,=\, \cS_0(t)(x^a\partial_x^by^\alpha\partial_y^\beta\scR_{\rm in})
\\
&\,+\,i\,\int_0^t \cS_0(t-s)\,\left[x^a\,\partial_x^by^\alpha\partial_y^\beta,\,\partial_x\partial_y-V'(x)y\right]\,\scR(s|\,x,y)\,\dD s\,,
\end{aligned}
$$
where $\cS_0(t)$ again denotes the one parameter group \eqref{def:S0}.  Hence, using  the conservation of the $L^2$ norm \eqref{ref:31} by $\cS_0(t)$,  we obtain that 
$$
\begin{aligned}
\left\|x^a\,\partial_x^b y^\alpha\partial_y^\beta\scR(t)\right\|_{L^2}
\, & \le\,\left\|x^a\partial_x^by^\alpha\partial_y^\beta\scR_{\rm in}\right\|_{L^2}
\\
\,&+\,\int_0^t\left\|\left[x^a\partial_x^by^\alpha\partial_y^\beta,\,\partial_x\partial_y\,-\,V'(x)\,y\right]\scR(s)\right\|_{L^2}\dD s\,.
\end{aligned}
$$
Thus, the former inequality and the computation of
$[x^a\partial_x^by^\alpha\partial_y^\beta,\partial_x\partial_y-V'(x)y]$
in \eqref{my:num2} imply that
\begin{eqnarray*}
\ds\|x^a\partial_x^by^\alpha\partial_y^\beta\scR(t)\|_{L^2}  & \le&  \ds\|x^a\partial_x^by^\alpha\partial_y^\beta\scR_{\rm in}\|_{L^2}
\\
& +&\int_0^t\left\{ a\,\|x^{a-1}\partial_x^by^\alpha\partial_y^{\beta+1}\scR\|_{L^2} \,+\,\alpha\,
   \|x^a\partial_x^{b+1}y^{\alpha-1}\partial_y^\beta
   \scR\|_{L^2} \right\}(s)\,\dD s
   \\
   & +&\int_0^t \left\{ \alpha \,a\,
        \|x^{a-1}\partial_x^by^{\alpha-1}\partial_y^\beta\scR\|_{L^2}   \,+\,\beta\,|V'(0)| \, \|x^{a}\partial_x^by^\alpha\partial_y^{\beta-1}\scR\|_{L^2} \right\}(s)\,\dD s
  \\
  &+& \beta\,\|V''\|_{L^\infty} \, \int_0^t \left\{ \|x^{a+1}\partial_x^by^\alpha\partial_y^{\beta-1}\scR\|_{L^2} \right\}(s)\,\dD s
  \\
& +&\max_{2\le n\le
  b+1}\|V^{(n)}\|_{L^\infty}\,\sum_{j=1}^b{b\choose j}
 \int_0^t \|x^a\partial_x^{b-j}y^{\alpha+1}\partial_y^\beta\scR\|_{L^2}(s) \,\dD s
     \\
& +& \beta\,\max_{2\le n\le
     b+1}\|V^{(n)}\|_{L^\infty}\,\sum_{j=1}^b{b\choose j}  \int_0^t \|x^a\partial_x^{b-j}y^\alpha\partial_y^{\beta-1}\scR\|_{L^2}(s) \,\dD s\,,
\end{eqnarray*}
where we used the mean-value inequality to prove that, for all $x\in\R$,
\begin{equation}
  \label{my:num3}
|V^\prime(x)|  \leq  |V(0)| \,+\, |x| \|V''\|_{L^\infty}.   
\end{equation}
Then we set 
$$
N_j(t) \,:=\,\sum_{a,b\geq 0}\sum_{\alpha,\beta\ge 0}
\|x^a\partial_x^by^\alpha\partial_y^\beta\scR(t)\|_{L^2} \, \delta_{a+b+\alpha+\beta,j}\,,
$$
where $\delta_{k,j}$ denotes the Kronecker symbol, which yields for
$j\geq 1$,
$$
\begin{aligned}
N_j(t)\,\le\, &
N_j(0)\,+\,\left(j^2+2j+j\|V''\|_{L^\infty}\right)\,\int_0^tN_j(s)\,\dD s
\,+\,j\,|V'(0)|\,\int_0^tN_{j-1}(s)\,\dD
s
\\
\,+\, & 2^j\,(1+j)\, \max_{2\le n\le j+1}\|V^{(n)}\|_{L^\infty}\,\sum_{k=1}^j\int_0^tN_{j+1-k}(s)\,\dD s\,.
\end{aligned}
$$
Thus, observing that 
$$
\cN_m(t)\,=\,\sum_{j=0}^mN_j(t)\,,
$$
we deduce the following inequality on $\cN_m(t)$ :
$$
\begin{aligned}
\cN_m(t) & \, \le\, \cN_m(0)\,+\,\left(m^2\,+\,2m\,+\,m\|V''\|_{L^\infty}\,+\,m\,|V'(0)|\right)\,\int_0^t\cN_m(s)\,\dD
s
\\
&\,+\,2^m\,(1+m)\,m\, \max_{2\le n\le m+1}\|V^{(n)}\|_{L^\infty} \,\int_0^t\cN_m(s)\,\dD s\,.
\end{aligned}
$$
Then we obtain the expected  upper bound by applying Gronwall's inequality. 
\end{proof}

This latter result is particularly important since it  allows us to
establish  several  error estimates. On the one hand, we prove an
error estimate with respect to $\hbar$ between
the solution $R^\hbar$ of the von Neumann equation and its semiclassical
limit $\scR$  (Theorem \ref{th:01}). On the other hand, we provide
an estimate with respect to the number of Hermite modes $N$ for the
semiclassical limit equation  (Theorem \ref{th:errorSC}).

\subsection{Proof of Theorem \ref{th:01}}
\label{sec:3.2}
Consider the solution $R^\hbar$ to  the von Neumann equation \eqref{vNvW} and
the solution $\scR$ to \eqref{eq:lim}. Then,  the difference $R^\hbar - \scR$ satisfies
\begin{equation*}
\partial_t(R^\hbar-\scR) \,=\,  i\,\left(\partial_x\partial_y \,-\,\frac{V(x+\frac\hbar 2\, y)\;-\,V(x-\frac\hbar 2\,
  y)}{\hbar}\right)\,(R^\hbar-\scR) \,-\, i\,\cE^\hbar\,\scR\,,
\end{equation*}
where $\cE^\hbar$ is defined in \eqref{def:Eh}.

By  Taylor's formula at order $3$, 
$$
V(x+z)\,=\,V(x)+V'(x)\;z\;+\;\frac12\,V''(x)\,z^2\,+\,\frac12\,\int_0^1(1-\theta)^2
\,V^{(3)}(x+\theta
z)\,z^3\,\dD \theta\,,
$$
so that
\begin{eqnarray*}
\cE^\hbar(x,y) &=& \frac{V(x+\tfrac\hbar 2\, y)-V(x-\tfrac\hbar 2\, y)}{\hbar}\,-\,V'(x)\,y
  \\
  &=&
     \frac{\hbar^2}{16}\,y^3\,\int_0^1(1-\theta)^2\left(V^{(3)}(x+\tfrac\hbar 2\,\theta\,
     y)\,+\,V^{(3)}(x-\tfrac \hbar 2\,\theta\, y)\right)\,\dD\theta\,.
\end{eqnarray*}
Hence,
$$
|\cE^\hbar(x,y)|\,=\,\left|\frac{V(x+\tfrac\hbar 2\, y)-V(x-\tfrac\hbar 2\; y)}{\hbar}\,-\,V'(x)\,y\right|\,\le\,\frac{\hbar^2}{24}\,|y|^3\,\|V^{(3)}\|_{L^\infty}\,.
$$
Using this upper bound and applying Proposition
\ref{prop:reg_Rhat} on $\scR$,  it yields that
\begin{eqnarray*}
\|R^\hbar(t)-\scR(t)\|_{L^2} & \le& \|R^\hbar_{\rm in}-\scR_{\rm in}\|_{L^2}\,+\,
\frac{\hbar^2}{24}\,\|V^{(3)}\|_{L^\infty}\,\int_0^t\|y^3\;\scR(s)\|_{L^2}\,\dD s
\\
& \le & \|R^\hbar_{\rm in}-\scR_{\rm in}\|_{L^2} \,+\, \frac{\hbar^2}{24}\,\|V^{(3)}\|_{L^\infty}\,\frac{e^{C_3[V]t}\;-\,1}{C_3[V]}
\;\cN_3(0)\,.
\end{eqnarray*}

\subsection{Proof of Theorem \ref{th:errorSC}}
\label{sec:3.3}

We now consider $\scR$ (resp. $\scR^{N}$), the solution  to the semiclassical equation 
\eqref{eq:lim} (resp. the Galerkin semiclassical equation
\eqref{Hermite:limN}). We will first establish some  properties on $\scR^N$ that
are also satisfied at the continuous level and next show that the $L^2$ estimate given in \eqref{esti:L2} is preserved at
the discrete level when the number of Hermite modes $N$ is finite.

\begin{proposition}
  \label{prop:L2-2}
 For a given $m\geq 0$, assume that $V$ satisfies \eqref{hyp:V1}-\eqref{hyp:V2}
while  $\scR_{\rm in}$ satisfies \eqref{hyp:Rhat}, and consider the solution $\scR^{N}$ to \eqref{Hermite:limN}. Then
\begin{itemize}
\item[$(i)$] for all $k,\, l\in\{0,\ldots, N\}$ and $t\geq 0$, we have 
  \begin{equation}
    \label{p:12}
  \scR^{N}_k(t|\,x) \,=\, (-1)^k\,\overline{\scR^{N}_k}(t|\,x)\,,
\end{equation}
and, assuming that $N$ is even,  the  trace property  in \eqref{TRvW} becomes 
$$
\int_\R  \scR^{N}(t|\,x,0)\,\dD x \,=\, \sum_{k=0}^{N}   \frac{H_k(0)}{\sqrt{2^k\,k!}}
                                                    \,  \int_\R 
                                   \scR^{N}_k(t|\,x) \,\dD x
                                                    \,=\, \int_\R  \cP_{N}\scR_{\rm in} (x,0)\dD x\,;
$$
\item[$(ii)$] for all $t\geq 0$, we have
  \begin{equation}
    \label{p:22}
\| \scR^{N}(t) \|_{L^2_{x,y}} \,=\,  \| \cP_{N}\scR_{\rm in} \|^2_{L^2_{x,y}}\,.
   \end{equation}
\end{itemize}
\end{proposition}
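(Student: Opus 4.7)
My plan is to verify each of the three identities in Proposition \ref{prop:L2-2} by the same template: check it at $t=0$ for the (projected) initial data, then show it is preserved by the linear flow \eqref{Hermite:limN}. For the symmetry in (i) I will use a uniqueness argument; the trace identity and the $L^2$ conservation in (ii) will both follow by differentiating in $t$ and exploiting the telescoping tridiagonal structure of \eqref{Hermite:limN}.

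For the first identity of (i), observe that the parity $\Phi_k(-y)=(-1)^k\Phi_k(y)$ (a consequence of \eqref{am:0}) combined with the change of variables $y\mapsto -y$ in \eqref{def:Rhk} shows that the natural self-adjointness assumption $\scR_{\rm in}(x,-y)=\overline{\scR_{\rm in}(x,y)}$ transfers to $\scR_{{\rm in},k}=(-1)^k\overline{\scR_{{\rm in},k}}$ for $0\le k\le N$. To propagate this to $\scR^N$, I set $v_k:=(-1)^k\overline{\scR^N_k}$ and take the complex conjugate of \eqref{Hermite:limN} (using that $\cD$ and $\cD^\star$ have real coefficients); multiplying by $(-1)^k$ and noting $(-1)^k\overline{\scR^N_{k\pm 1}}=-v_{k\pm 1}$, I find that $(v_k)_{0\le k\le N}$ solves exactly the same linear Cauchy problem as $(\scR^N_k)_{0\le k\le N}$, so uniqueness yields $v_k\equiv \scR^N_k$, which is the required identity.

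For the trace identity, I differentiate $T(t):=\sum_{k=0}^N\Phi_k(0)\int_\R\scR^N_k(t|\,x)\,\dD x$ in time. Since $\int_\R\cD u\,\dD x=\int_\R\cD^\star u\,\dD x=\int_\R V'(x)u(x)\,\dD x$ for sufficiently decaying $u$, inserting \eqref{Hermite:limN} and reindexing the two coupling sums reduces the task to showing that the coefficient
\[
c_j\,:=\,\sqrt{\tfrac{j+1}{2}}\,\Phi_{j+1}(0)\,+\,\sqrt{\tfrac{j}{2}}\,\Phi_{j-1}(0)
\]
vanishes for each $0\le j\le N-1$, together with an unpaired boundary term $c_N=\sqrt{N/2}\,\Phi_{N-1}(0)$ coming from the truncation $\scR^N_{N+1}=0$. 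Evaluating the Hermite recursion \eqref{yPhi} at $y=0$ gives exactly $c_j=0$ for every $j\ge 0$, whereas $c_N=0$ holds precisely when $\Phi_{N-1}(0)=0$, i.e.\ when $N-1$ is odd: this is where the parity assumption on $N$ enters. Hence $\partial_t T(t)=0$, and the initial value $T(0)=\int_\R\cP_N\scR_{\rm in}(x,0)\,\dD x$ gives the claim.

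For (ii), I compute $\partial_t\|\scR^N(t)\|^2_{L^2_{x,y}}=2\,\Re\sum_{k=0}^N\langle\partial_t\scR^N_k,\scR^N_k\rangle$ and insert \eqref{Hermite:limN}. Applying the adjoint relation \eqref{prop1:A} to the first coupling sum and shifting the index by $j=k-1$ shows that the contribution $\sum_{j=0}^{N-1}\sqrt{(j+1)/2}\,\langle\scR^N_j,\cD^\star\scR^N_{j+1}\rangle$ is the complex conjugate of the second coupling sum $\sum_{j=0}^{N-1}\sqrt{(j+1)/2}\,\langle\cD^\star\scR^N_{j+1},\scR^N_j\rangle$; their sum is therefore real, its imaginary part vanishes and $\partial_t\|\scR^N\|^2_{L^2}=0$, the conventions $\scR^N_{-1}=\scR^N_{N+1}=0$ killing the would-be boundary terms. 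The main delicate point throughout is the bookkeeping of boundary terms in the truncated coupled system: automatic for the symmetry in (i), absorbed by the vanishing conventions in (ii), but genuinely forcing the hypothesis that $N$ is even in the trace identity.
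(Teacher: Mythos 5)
Your proof is correct and follows essentially the same route as the paper's: a uniqueness argument for the symmetry \eqref{p:12}, time-differentiation of the trace with cancellation from the Hermite recursion at $y=0$, and the adjoint relation \eqref{prop1:A} producing a real quantity $\Sigma+\overline{\Sigma}$ whose product with $i$ has no real part. The only (cosmetic) difference is that you obtain the cancellation $c_j=0$ by evaluating \eqref{yPhi} at $y=0$ rather than invoking the recursion \eqref{def:Hk}, and you isolate explicitly the boundary coefficient $\sqrt{N/2}\,\Phi_{N-1}(0)$ whose vanishing is exactly where the parity of $N$ is used.
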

\begin{proof}
 On the one hand, using  \eqref{am:0}, we have
  $$
\Phi_k(y)\,=\,(-1)^k\, \Phi_k(-y),\qquad y\in\R,\quad k\ge 0\,,
  $$
 hence,
  \begin{eqnarray*}
\scR_{{\rm in},k}(x)  \,=\,\int_{\R} \scR_{\rm in}(x,y)\,\Phi_k(y)\,\dD y
                 &=& \int_{\R} \overline{\scR_{\rm in}(x,-y)}\,\Phi_k(y)\,\dD y \\
                 &=& \int_{\R} \overline{\scR_{\rm in}(x,z)}\,\Phi_k(-z)\,\dD z 
                 \,=\, (-1)^k\,\overline{\scR_{{\rm in},k}(x)}\,.
  \end{eqnarray*}
  On the other hand, we easily show that if
  for all $k\in\{0,\ldots,\,N\}$,  $\scR^{N}_k$ is solution to
\eqref{Hermite:limN}, then  $(-1)^k \overline{\scR^{N}_k}$ also
solves \eqref{Hermite:limN}.  By  uniqueness of the solution of \eqref{Hermite:limN},
we get the expected result
$$
\scR^{N}_k(t) \,=\,(-1)^k\,\overline{\scR^{N}_k}(t), \quad
\forall \,t\geq 0\,.
$$
Now let us turn to the second equality, for which we suppose that $N$ is even and compute the time derivative
of the trace of $\scR^{N}$ to find that  
$$
\frac{\dD}{\dD t} \int_\R  \scR^{N}(t|\,x,0)\dD x  \,=\,
                                                         \sum_{k=0}^{N} \frac{H_k(0)}{\sqrt{2^k\,k!}} \,\int_\R 
                                   \partial_t \scR^{N}_k(t|\,x)
                                                         \,\dD x =
  \cJ_1(t) + \cJ_2(t)\,,
$$
with 
$$
\left\{
\begin{array}{l}                                                 
 \ds\cJ_1(t)  \,=\, -i\,\sum_{k=0}^{N} \frac{H_k(0)}{\sqrt{2^k\,k!}}\,\int_\R \left( \sqrt{\frac{k}{2}}\partial_x\scR_{k-1}^{N} -\sqrt{\frac{k+1}{2}}\partial_x\scR_{k+1}^{N}
       \right)\,\dD x\,,
  \\[1.2em]
 \ds\cJ_2(t)  \,=\,  - i\,\sum_{k=0}^{N} \frac{H_k(0)}{\sqrt{2^k\,k!}}\,\int_\R V'(x)\,\left( \sqrt{\frac{k}{2}}\scR_{k-1}^{N} +\sqrt{\frac{k+1}{2}}\scR_{k+1}^{N}
       \right)
                                    \,\dD x
                                                    \,.
\end{array}\right.
$$
We first observe that the  term $\cJ_1(t)$ vanishes since
$\scR^N_k(t|\cdot)$ is sought in $L^2(\R_x)$, hence  $\scR^N_k(t|\,x)\to 0$ along sequences $x_j\to\pm\infty$. Furthermore, using that
$\scR^N_{-1} = \scR^N_{N+1}=0$,   $H_1(0)=0$ while
$H_{N-1}(0)=H_{N+1}(0)=0$ since $N$ is even shows that $\cJ_2(t)$ may be written as,
\begin{eqnarray*}
\cJ_2(t) &=&
                                                    -i\,\sum_{k=1}^{N-1} \int_\R V'(x)\, \scR_{k}^{N}\,\left( \sqrt{\frac{k+1}{2}}\frac{H_{k+1}(0)}{\sqrt{2^{k+1}(k+1)!}} +\sqrt{\frac{k}{2}}  \,\frac{H_{k-1}(0)}{\sqrt{2^{k-1}\,(k-1)!}}
       \right)
                                                     \,\dD x
  \\
                                                 &=&
                                                    -i\,\sum_{k=1}^{N-1}
      \int_\R V'(x)\, \frac{\scR_{k}^{N}}{\sqrt{2^{k+2}k!}}\,\left(
                                                     H_{k+1}(0) +
                                                     \,2\;k\, H_{k-1}(0) \right)
                                                    \,\dD x\,=\, 0,                     
\end{eqnarray*}
since the recursive relation on Hermite polynomials \eqref{def:Hk}  gives $H_{k+1}(0)
+ 2k H_{k-1}(0)=0$. Hence, the result follows when $N$ is even.

We next prove $(ii)$ for any $N\in\N$ using that
\[
\| \scR^{N}(t)\|_{L^2_{x,y}}^2
\,=\,
 \sum_{
	k=0}^{N}\|  \scR_{k}^{N}(t)\|^2_{L^2_x},\qquad t \geq 0\,. 
  \]
Thus, we have 
\begin{eqnarray*}
  \frac{1}{2}\,\frac{\dD}{\dD t} \sum_{k=0}^{N} \| 
                                   \scR^{N}_k(t)\|_{L^2}^2 &=&  {\rm Re}\left(i\,\sum_{k=0}^{N} \left\langle \sqrt{\frac{k}{2}}\cD\scR_{k-1}^{N} +\sqrt{\frac{k+1}{2}}\cD^\star\scR_{k+1}^{N},
       \,\scR^{N}_k\right\rangle\right)\,,
\end{eqnarray*}
where $\langle.,\,\rangle$ denotes the standard inner product
\eqref{inner:p}  on
$L^2(\R;\C)$. Using the duality
property \eqref{prop1:A} on the operators $\cD$ and $\cD^\star$, we prove that the  term on the right-hand side vanishes.  Indeed, we have
\begin{eqnarray*}
 \sum_{k=0}^{N}\left\langle\sqrt{\frac{k}{2}}\cD
                                                          \scR_{k-1}^{N} +\sqrt{\frac{k+1}{2}}\cD^\star \scR_{k+1}^{N}
       ,  \,\scR^{N}_k\right\rangle
                                                              =
                                                                  \sum_{k=0}^{N} \left(\sqrt{\frac{k}{2}}
                                                          \left\langle
  \scR_{k-1}^{N},\,
                                                                  \cD^\star
                                                                  \scR^{N}_k\right\rangle
                                                                  +
                                                                 \sqrt{\frac{k+1}{2}}
                                                                  \left\langle\cD^\star
  \scR_{k+1}^{N},\, \scR^{N}_k\right\rangle\right).
  \end{eqnarray*}
  Reordering and recalling that $\scR^N_{-1}=\scR^N_{N+1}=0$, we find that
  \begin{eqnarray*}
     \sum_{k=0}^{N} \left(\sqrt{\frac{k}{2}}
                                                          \left\langle
    \scR_{k-1}^{N},\,
                                                                  \cD^\star
                                                                  \scR^{N}_k\right\rangle
                                                                  +
                                                                 \sqrt{\frac{k+1}{2}}
                                                                  \left\langle\cD^\star\scR_{k+1}^{N},\, \scR^{N}_k\right\rangle\right)
                                                              &=&
                                                                  \Sigma
                                                                  +
                                                                  \overline{\Sigma}
                                                                  \,\in\R\,,
 \end{eqnarray*}
 with
 $$
 \Sigma \,=\, \sum_{k=1}^{N} \sqrt{\frac{k}{2}}\,\langle\scR_{k-1}^{N},\,\cD^\star\scR^{N}_k\rangle\,.
 $$                                                         Hence
\[
\frac{\dD}{\dD t}\sum_{k=0}^n\|\scR^N_k(t)\|^2
=2\mathrm{Re}\left(i\left(\Sigma
+\overline{\Sigma}\right)\right)=0,
\]
so that, the
                                                          $L^2$ norm
                                                          for
                                                          \eqref{Hermite:limN}
                                                          is  conserved.  
\end{proof}

We are now ready to prove Theorem \ref{th:errorSC}.  We consider  $\scR$ the solution to the semiclassical equation \eqref{eq:lim} with $d=1$ and
$\scR^N$ the solution to the semiclassical Galerkin system
\eqref{Hermite:limN} and recall that $\cP_N\scR$ is
the projection of the semiclassical solution $\scR$ onto
the first $N$ Hermite modes. From the triangle inequality, the error estimate is obtained by bounding  one term corresponding
to the projection error and  a second one coming from
the error between  $\scR^{N}$ and $\cP_N\scR$, that is,
\begin{equation}
  \label{goal:1}
\| \scR^N(t) - \scR(t) \|_{L^2} \,\leq \, \|  \scR^N(t) -  \cP_N\scR(t) \|_{L^2} \,+\, \| \cP_N\scR(t) - \scR(t) \|_{L^2}\,.  
\end{equation}
We first evaluate the propagation error between the solution $\scR^{N}$
to \eqref{Hermite:limN} and the projection $\cP_{N} \scR$, where $\scR$ is the solution  to semiclassical limit equation \eqref{eq:lim}. For all $k\in\{0,\ldots,N\}$,
$$
\begin{aligned}
  \partial_t\left(\scR^{N}_k-\scR\right)_k \,=\,  \,-\, i\,\left(\sqrt{\frac k 2}\,
      \cD\left(\scR^{N}-\scR\right)_{k-1}\,+\, \sqrt{\frac{k +1}{2}}\,
      \cD^\star \left(\scR^{N}-\scR \right)_{k+1}\right)\,,
\end{aligned}
$$
and  we deduce from  the Duhamel formula that
$$
\scR^{N}(t)-\cP_N\scR(t) \,=\, \cS_0(t) \left(\scR^{N}-\cP_N\scR\right)(0)
\,+\, i\, \sqrt{\frac{N +1}{2}}\,\int_0^t  \cS_0(t-s) 
      \cD^\star\scR_{N+1}(s) \,\dD s
$$
where $\cS_0(t)$ is the one-parameter group \eqref{def:S0}. Hence, using the conservation of the $L^2$ norm  by $\cS_0(t)$, we obtain that
\begin{eqnarray}
  \label{star}
  \|\scR^{N}(t)-\cP_{N} \scR(t)\|_{L^2} &=& \left(\sum_{k=0}^{N}\int_\R|\scR_k^{N}(t|\,x)-\scR_k(t|\,x)|^2\dD x\right)^{1/2}
  \\
  \nonumber
                                     &\leq& 
  \sqrt{\frac{N+1}2}\int_0^t\left(\int_\R\left|\cD^\star\scR_{N+1}(\tau|\,x) \right|^2\dD
                                            x\right) ^{1/2} \dD\tau\,.
\end{eqnarray}
Let us estimate the right-hand side using  the  Parseval identity :
since $\cD^* $ acts on the variable $x$, while $\cP_N$ acts on the
variable $y$,  these two operators commute, and applying   Lemma
\ref{lem:4} shows that
\begin{eqnarray*}
\int_\R\left| \cD^\star\scR_{N+1}(\tau|\,x)\right|^2\dD x &\leq &
                                                    \iint_{\R^2}\left|
                                                                    \cD^\star\left(\scR-\cP_{N}\scR\right)(\tau|\,x,y)\right|^2\dD
                                                    x\,\dD y
\\
&\le&
      \frac1{(2{N}+3)^{2p}}\,\iint_{\R^2}\left| \left(y^2-\partial^2_y\right)^p\cD^\star\scR(\tau|\,x,y)\right|^2\dD x\,\dD y\,.
\end{eqnarray*}
Then, using the mean-value inequality \eqref{my:num3}
\begin{eqnarray*}
\left|\left(y^2-\partial^2_y\right)^p\cD^\star\scR (\tau|\,x,y)\right|^2 &\leq& 2\,\left| \left(y^2-\partial^2_y\right)^p\partial_x\scR (\tau|\,x,y)\right|^2\,+\, 2\,|V'(x)|^2\, \left| \left(y^2-\partial^2_y\right)^p\scR (\tau|\,x,y)\right|^2
\\
&\le&  2\,\left| \left(y^2-\partial^2_y\right)^p\partial_x\scR (\tau|\,x,y)\right|^2
\\
                                            &+& 4 \left( |V'(0)|^2 +
                                                x^2 \,
                                                \|V''\|_{L^\infty}^2\right)
                                                \,  \left| \left(y^2-\partial^2_y\right)^p\scR (\tau|\,x,y)\right|^2.
\end{eqnarray*}
Hence, there exists a constant $C_p>0$, depending only on $p\geq 1$
such that, for all $\tau\geq 0 $,
$$
\iint_{\R^2} \left|\left(y^2-\partial^2_y\right)^p\cD^\star\scR(\tau|\,x,y)\right|^2\dD x\,\dD y
\,\le\, C_p\max\left(1,|V'(0)|,\|V''\|_{L^\infty}\right)\,\cN_{2p+1}(\tau)^2\,,
$$
where $\cN_{2p+1}(\tau)$ is given in \eqref{def:Nm}.  Therefore, we
conclude by applying Proposition \ref{prop:reg_Rhat} that
\begin{eqnarray}
  \nonumber
&&\int_0^t\left(\int_\R|\cD^\star\scR_{N+1}(\tau|\,x)|^2\dD x\right)^{1/2}\dD\tau
  \\[0.9em]
  \label{bistar}
  &\le & \frac1{(2{N}+3)^p}\,\int_0^t\left(\iint_{\R^2}\left|\left(y^2-\partial^2_y\right)^p\cD^\star\scR(\tau|\,x,y)\right|^2\dD x\,\dD y\right)^{1/2}\dD\tau
\\[0.9em]
&\le&\frac{\sqrt{C_p  \,\max\left(1,|V'(0)|,\|V''\|_{L^\infty}\right)}}{(2{N}+3)^p}\,\frac{e^{C_{2p+1}[V]t}-1}{C_{2p+1}[V]}\,\cN_{2p+1}(0)\,,
\end{eqnarray}
where $C_{2p+1}[V]$ is given in \eqref{def:Cm}. Finally, gathering
together \eqref{star}-\eqref{bistar}, it yields the following inequality  
\begin{equation}
  \label{res:01}
\|\scR^{N}(t)-\cP_{N} \scR(t)\|_{L^2}\,\le\,
\frac{\sqrt{C_p\,\max(1,|V'(0)|,\|V''\|_{L^\infty})}}{(2{N}+3)^{p-1/2}}\,\frac{e^{C_{2p+1}[V]t}-1}{C_{2p+1}[V]}
\, \cN_{2p+1}(0)\,.
\end{equation}

Now, it remains to evaluate  the last term on the right-hand side in
\eqref{goal:1}, which corresponds to the projection error.  Thanks to Lemma \ref{lem:4}, we have
$$
\|\scR(t)-\cP_{N}\scR(t)\|_{L^2(\R^2)}\le\frac1{(2{N}+3)^p}\|\left(y^2-\partial_y^2\right)^p\scR(t)\|_{L^2(\R^2)},
$$
hence using the propagation of regularity in Proposition \ref{prop:reg_Rhat}, we
finally get
\begin{equation}
  \label{res:02}
\|\scR(t)-\cP_{N}\scR(t)\|_{L^2(\R^2)}\,\le\,\frac{K_p}{(2{N}+3)^p} \, e^{C_{2p}[V]\,t}\,\cN_{2p}(0),
\end{equation}
where $K_p>0$ is a constant depending only on $p$. Gathering together the former results \eqref{res:01} and \eqref{res:02}, we get
the desired inequality
\begin{eqnarray*}
\| \scR^N(t) - \scR(t) \|_{L^2} & \leq &   \frac{K_p}{(2{N}+3)^p} \,
                                             e^{C_{2p}[V]\,t}\,\cN_{2p}(0)
  \\
  &+& \frac{\sqrt{C_p\,\max(1,|V'(0)|,\|V''\|_{L^\infty})}}{(2{N}+3)^{p-1/2}}\,\frac{e^{C_{2p+1}[V]t}-1}{C_{2p+1}[V]}
\, \cN_{2p+1}(0)\,.
\end{eqnarray*}

\section{Analysis of the von Neumann equation}
\label{sec:4}
\setcounter{equation}{0}
\setcounter{figure}{0}
\setcounter{table}{0}

 We now treat the von Neumann equation written in Weyl's variables
\eqref{vNvW} and focus on the contribution of the non-local term
$\cE^\hbar$ in \eqref{def:Eh}. We apply the same strategy as before, but the proof of
the propagation of regularity now becomes more intricate.

\subsection{Propagation of regularity on the von Neumann equation}
\label{sec:4.1}

It is worth mentioning that the latter result only requires that the
solution to the von Neumann equation \eqref{vNvW} is $R^\hbar(t)\in L^2$
since we use the regularity of the solution $\scR$ of \eqref{eq:lim} proved in Proposition \ref{prop:reg_Rhat}.  However, when
the initial datum $R^\hbar_{\rm in}$ is smooth, we  prove that for any $m\geq
0$, the quantity $\cN_m(t)$ is also  propagated for the
solution $R^\hbar$ to the von Neumann equation \eqref{vNvW}.

\begin{proposition}
  \label{prop:reg_R}
Let $m\geq 0$ and  $V$ be the potential  such that
\eqref{hyp:V1}-\eqref{hyp:V2} hold, and
suppose that  for all integer  $a$, $b$,
$\alpha$ and $\beta$ such that  $a+b+\alpha+\beta\le m$, the
assumption \eqref{hyp:R}  on the initial data $R^\hbar_{\rm in}$ is satisfied. Then the solution $R^\hbar$ to \eqref{vNvW}  is such that  for all  $t\ge 0$,
\begin{equation*}
\cN_m(t)
\le \cN_m(0) \,e^{C_m[V]\,t}\,,
\end{equation*}
where $\cN_m(t)$ is defined in \eqref{def:Nm} and applied to
$R^\hbar$ while $C_m[V]>0$ is a constant depending only on $m$ and on the potential
$V$.
\end{proposition}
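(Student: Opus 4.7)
The plan is to mirror the proof of Proposition \ref{prop:reg_Rhat}, replacing the semiclassical drift $V'(x)\,y$ by the Weyl multiplication operator
\[
W^\hbar(x,y) \,:=\, \frac{V(x+\tfrac{\hbar}{2}y)\,-\,V(x-\tfrac{\hbar}{2}y)}{\hbar}.
\]
First I would apply the differential monomial $M := x^a\,\partial_x^b\,y^\alpha\,\partial_y^\beta$ to \eqref{vNvW}, obtaining
\[
\partial_t(MR^\hbar) \,=\, i\,\bigl(\partial_x\partial_y \,-\, W^\hbar\bigr)(MR^\hbar)\,+\,i\,\bigl[M,\,\partial_x\partial_y-W^\hbar\bigr]\,R^\hbar,
\]
and then use the Duhamel representation together with the $L^2$-isometry of $\cS_\hbar(t)$ (Proposition \ref{prop:L2}) to deduce
\[
\|M R^\hbar(t)\|_{L^2} \,\leq\, \|M R^\hbar_{\rm in}\|_{L^2}\,+\,\int_0^t \bigl\|[M,\partial_x\partial_y - W^\hbar]R^\hbar(s)\bigr\|_{L^2}\,\dD s.
\]

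I split the commutator as $[M,\partial_x\partial_y]-[M,W^\hbar]$. The first bracket is identical to the one computed in \eqref{my:num2} in the proof of Proposition \ref{prop:reg_Rhat}, and contributes terms whose $L^2$ norms are already absorbed into $\cN_m$. For the second bracket, since $x^a y^\alpha$ commutes with the multiplication operator $W^\hbar$, the Leibniz rule yields
\[
[M,W^\hbar]\,R^\hbar \,=\, x^a y^\alpha \sum_{\substack{0 \le j \le b,\; 0 \le k \le \beta \\ (j,k)\neq (0,0)}} \binom{b}{j}\binom{\beta}{k}\,\bigl(\partial_x^j\partial_y^k W^\hbar\bigr)\,\partial_x^{b-j}\partial_y^{\beta-k} R^\hbar.
\]

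The central step, and the principal technical obstacle, is to bound each factor $\partial_x^j\partial_y^k W^\hbar$ \emph{uniformly in} $\hbar\in(0,1]$ by a quantity of the form $C\,(1+|x|+|y|)$, so that the $1/\hbar$ prefactor in the definition of $W^\hbar$ is absorbed. Starting from
\[
\partial_x^j\partial_y^k W^\hbar(x,y) \,=\, \frac{(\hbar/2)^k}{\hbar}\,\bigl(V^{(j+k)}(x+\tfrac{\hbar}{2}y) \,-\, (-1)^k V^{(j+k)}(x-\tfrac{\hbar}{2}y)\bigr),
\]
I would distinguish three cases. If $k$ is even with $k \geq 2$, the mean value theorem gives $|\partial_x^j\partial_y^k W^\hbar| \le (\hbar/2)^k |y|\, \|V^{(j+k+1)}\|_{L^\infty}$, with the $1/\hbar$ absorbed by the difference. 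If $k$ is odd with $j+k \geq 2$, bounding the sum of two values of the same bounded derivative yields $|\partial_x^j\partial_y^k W^\hbar| \leq \|V^{(j+k)}\|_{L^\infty}$ up to an $\hbar$-independent constant. The borderline case $(j,k)=(0,1)$ requires the growth estimate $|V'(z)|\leq |V'(0)|+|z|\|V''\|_{L^\infty}$ used in \eqref{my:num3}, giving $|\partial_y W^\hbar|\leq |V'(0)|+|x|\,\|V''\|_{L^\infty}+\tfrac{\hbar}{2}|y|\,\|V''\|_{L^\infty}$. All required derivatives of $V$ satisfy $j+k \le m+1$, which is exactly the regularity provided by \eqref{hyp:V2}.

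Inserting these bounds into the commutator sum, each resulting summand is a norm $\|x^{a'}\partial_x^{b'}y^{\alpha'}\partial_y^{\beta'} R^\hbar\|_{L^2}$ with $a'+b'+\alpha'+\beta' \leq (a+b+\alpha+\beta)+1-(j+k) \leq m$, since $j+k \geq 1$. Proceeding exactly as in the proof of Proposition \ref{prop:reg_Rhat}, I would sum over all multi-indices $(a,b,\alpha,\beta)$ with $a+b+\alpha+\beta \le m$ to obtain an integral inequality
\[
\cN_m(t) \,\le\, \cN_m(0) \,+\, C_m[V] \int_0^t \cN_m(s)\,\dD s,
\]
where the constant $C_m[V]>0$ depends only on $m$ and the seminorms $\max_{2\le n\le m+1}\|V^{(n)}\|_{L^\infty}$, $|V'(0)|$ (and has the same structure as \eqref{def:Cm}). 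Gronwall's lemma then yields the desired exponential estimate. The main difficulty lies in the case analysis on the parity of $k$ that turns the apparent $1/\hbar$ singularity of $W^\hbar$ into uniform bounds; once this is handled, the argument reduces to the same accounting as in the semiclassical case.
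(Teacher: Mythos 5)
Your proposal follows essentially the same route as the paper's Appendix~\ref{appA}: Duhamel plus the $L^2$-isometry of $\cS_\hbar(t)$, a Leibniz expansion of the commutator with the Weyl multiplication operator, and a parity/order case analysis on $\partial_x^j\partial_y^k W^\hbar$ that trades the $1/\hbar$ for one extra power of $|x|$ or $|y|$ (which your exponent count $n+1-(j+k)\le n$ correctly absorbs into $\cN_m$). The only slip is that your case list omits $k=0$, $j\ge 1$, but the mean-value bound you state for even $k$ applies verbatim there (giving $|y|\,\|V^{(j+1)}\|_{L^\infty}$, the term the paper writes as $b\,\frac{V'(x+\frac{\hbar}{2}y)-V'(x-\frac{\hbar}{2}y)}{\hbar y}\,x^a\partial_x^{b-1}y^{\alpha+1}\partial_y^\beta$), so the argument is complete once that case is folded in.
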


\begin{proof}
  This proof follows the same strategy as the one presented in
  Proposition \ref{prop:reg_Rhat} but  is rather more technical. See Appendix \ref{appA} for details. 
  \end{proof}

\subsection{Proof of Theorem \ref{th:errorvN}}
\label{sec:4.2}

Let us first show that the $L^2$ estimate given in \eqref{esti:L2} is
preserved when the number of modes $N$ is finite in the approximation
of the von Neumann equation using Hermite polynomials.

\begin{proposition}
  \label{prop:L2-1}
For a given $m\geq 0$, assume that $V$ satisfies \eqref{hyp:V1}-\eqref{hyp:V2}
while  $R^\hbar_{\rm in}$ satisfies \eqref{hyp:R} and consider $R^{\hbar,N}$ the solution to \eqref{Hermite:DN}. Then
\begin{itemize}
\item[$(i)$] for all $k,\, l\in\{0,\ldots, N\}$ and $t\geq 0$, we have 
  \begin{equation}
    \label{p:1}
  R^{\hbar,N}_k(t) \,=\, (-1)^k\,\overline{R^{\hbar,N}_k}(t)\,;
  \end{equation}
\item[$(ii)$] for all $t\geq 0$, we have
  \begin{equation}
    \label{p:2}
\| R^{\hbar,N}(t) \|_{L^2} \,=\,  \| \cP_{N}R^\hbar_{\rm in} \|^2_{L^2}\,.
   \end{equation}
\end{itemize}
\end{proposition}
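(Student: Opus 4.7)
The plan is to mirror the proof of Proposition \ref{prop:L2-2} for the semiclassical Galerkin system \eqref{Hermite:limN}, adding only the bookkeeping needed to accommodate the extra coupling term $\sum_l \cE^\hbar_{k,l}\,R^{\hbar,N}_l$ on the right-hand side of \eqref{Hermite:DN}. Two algebraic inputs on this coupling will be used throughout: the coefficients $\cE^\hbar_{k,l}(x)$ are real and symmetric in $(k,l)$ by \eqref{Ekl:sym}, and $\cE^\hbar_{k,l}\equiv 0$ whenever $k+l$ is even by \eqref{toto:1}.

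For (i), I would set $u_k:=(-1)^k\overline{R^{\hbar,N}_k}$, take the complex conjugate of \eqref{Hermite:DN}, and multiply through by $(-1)^k$. Since $\cD$ and $\cD^\star$ are real operators, the transport terms recombine (just as in Proposition \ref{prop:L2-2}) into $+i\bigl(\sqrt{k/2}\,\cD u_{k-1}+\sqrt{(k+1)/2}\,\cD^\star u_{k+1}\bigr)$. The nonlocal term picks up a factor $(-1)^{k+l}$ inside the sum, but by \eqref{toto:1} this factor equals $-1$ on every pair $(k,l)$ where $\cE^\hbar_{k,l}$ does not already vanish, so the sign exactly matches the original right-hand side. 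Thus $u_k$ satisfies the same Galerkin system. Combining \eqref{SAvW} with $\Phi_k(-y)=(-1)^k\Phi_k(y)$ (from \eqref{am:0}) yields, via the change of variable $y\mapsto -y$, the identity $R^\hbar_{{\rm in},k}=(-1)^k\overline{R^\hbar_{{\rm in},k}}$, so $u_k(0)=R^{\hbar,N}_k(0)$. Uniqueness for this linear ODE system in $\mP_N(\R)^{N+1}$ then gives \eqref{p:1}.

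For (ii), I would differentiate $\|R^{\hbar,N}(t)\|_{L^2}^2=\sum_{k=0}^N\|R^{\hbar,N}_k(t)\|_{L^2}^2$ in time and show that both contributions on the right-hand side of \eqref{Hermite:DN} yield purely imaginary quantities, so that taking real parts gives zero. The transport contribution is handled verbatim as in Proposition \ref{prop:L2-2}, with the adjoint relation \eqref{prop1:A} reorganizing the sum into $\Sigma+\overline\Sigma\in\R$ and the prefactor $-i$ killing its real part. For the nonlocal contribution, the quantity
$$
M(t)\,:=\,\sum_{k,l=0}^N\int_\R \cE^\hbar_{k,l}(x)\,R^{\hbar,N}_l(t|\,x)\,\overline{R^{\hbar,N}_k(t|\,x)}\,\dD x
$$
equals its own complex conjugate after exchanging the dummy indices $k\leftrightarrow l$ and using that $\cE^\hbar_{k,l}$ is real and symmetric; hence $M(t)\in\R$, so $-iM(t)$ is purely imaginary. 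Integrating in time from $R^{\hbar,N}(0)=\cP_N R^\hbar_{\rm in}$ then gives \eqref{p:2}.

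The only mildly delicate point is the sign bookkeeping in (i), where one must combine the Weyl-variable symmetry of the initial datum with the parity rule \eqref{toto:1} to obtain the correct cancellation. Beyond that, the argument is a transcription of Proposition \ref{prop:L2-2}, so no new analytical obstacle is expected.
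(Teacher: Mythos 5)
Your proof is correct. Part $(i)$ is essentially the paper's argument: show that $(-1)^k\overline{R^{\hbar,N}_k}$ solves the same Galerkin system, using the parity rule \eqref{toto:1} to fix the sign of the nonlocal term, and conclude by uniqueness together with the symmetry of the initial datum inherited from \eqref{SAvW} and \eqref{am:0}. (Minor quibble: \eqref{Hermite:DN} is a system of evolution PDEs in $x$, not an ODE system in $\mP_N(\R)^{N+1}$, but the uniqueness invocation is the same as the paper's.)

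For part $(ii)$ you take a genuinely different, and in fact more economical, route on the nonlocal contribution. The paper shows that each summand $\langle\cE^\hbar_{k,l}R^{\hbar,N}_l,R^{\hbar,N}_k\rangle$ is \emph{antisymmetric} in $(k,l)$, which requires both the parity rule \eqref{toto:1} and the symmetry \eqref{p:1} of the solution established in part $(i)$; the double sum then vanishes identically. You instead observe that the quadratic form $M(t)=\sum_{k,l}\langle\cE^\hbar_{k,l}R^{\hbar,N}_l,R^{\hbar,N}_k\rangle$ is \emph{real}, using only that the matrix $\bigl(\cE^\hbar_{k,l}\bigr)$ is real and symmetric (i.e.\ Hermitian), so that $\mathrm{Re}\bigl(-i\,M(t)\bigr)=0$. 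This is weaker as a conclusion (the sum need not vanish, only its contribution to the real part does) but it is all that the energy identity requires, and it decouples part $(ii)$ from part $(i)$ and from the parity structure of $\cE^\hbar_{k,l}$ entirely. Both arguments are valid; yours isolates more cleanly the structural reason for $L^2$ conservation, namely that the truncated potential term acts as multiplication by $-i$ times a Hermitian matrix-valued symbol.
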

\begin{proof}
 To prove \eqref{p:1}, use \eqref{toto:1} and proceed as in the proof
 of  Proposition \ref{prop:L2-2}  to show that, if for all
 $k\in\{0,\ldots,N\}$, $R^{\hbar,N}_k$ is solution to  \eqref{Hermite:DN}, then
 $(-1)^k \, \overline{R^{\hbar,N}_k}$  also solves \eqref{Hermite:DN},
 which gives the expected result.
 \\
 
We next prove the propagation of the $L^2$ norm  for $R^{\hbar,N}$ again  as
in Proposition \ref{prop:L2-2}. Thus, we have 
$$
  \frac{1}{2}\,\frac{\dD}{\dD t}  \| 
  R^{\hbar,N}(t)\|_{L^2}^2 \,=\, \cI_1(t) + \cI_2(t),
$$
with
$$
\left\{
\begin{array}{l}
  \ds\cI_1(t) \,:=\, {\rm Re}\left( i\,\sum_{k=0}^{N}
                                                          
                                                          \left\langle
                                                          \sqrt{\frac{k}{2}}\cD
                                                          R^{\hbar,N}_{k-1} +\sqrt{\frac{k+1}{2}}\cD^\star R^{\hbar,N}_{k+1}
      , \, R^{\hbar,N}_k
                                                               \right\rangle \right)\,,
  \\[1.1em]
 \ds\cI_2(t) \,:=\, {\rm Re}\left( i\,\sum_{k,l=0}^{N} \langle \cE^\hbar_{k,l} \,
R^{\hbar,N}_l,\,  R^{\hbar,N}_k\rangle\,\right)\,,
\end{array}
\right.
$$
where $\langle.,\,\rangle$ denotes the standard inner product
\eqref{inner:p}  on
$L^2(\R;\C)$. On the one hand,  observing that $\cI_1(t)$ is the same
term as  the one of the right-hand side in the proof of
Proposition \ref{prop:L2-2}, we get that $\cI_1(t) =0$. On the other
hand, using that $\cE^\hbar_{k,l}\in \R$ is symmetric  and satisfies
\eqref{toto:1} while $R^{\hbar,N}$ verifies  \eqref{p:1}, we get
that for all $k$, $l\in\{0,\ldots, {N}\}$
\begin{eqnarray*}
\langle \cE^\hbar_{k,l} \,
R^{\hbar,N}_l ,\, R^{\hbar,N}_k\rangle  &=& \int_{\R} \cE^\hbar_{k,l}\,R^{\hbar,N}_l\,\, \overline{R^{\hbar,N}_k}\,\dD x
\\
&=&\int_{\R}\left(  - (-1)^{l+k}\cE^\hbar_{k,l}\right)\,\left( (-1)^l\,\overline{R^{\hbar,N}_l}\right)\,\,\left((-1)^k R^{\hbar,N}_k\right)\,\dD x
  \\
                 &=&   -\,\int_{\R}\cE^\hbar_{l,k}\, R^{\hbar,N}_k\,\, \overline{R^{\hbar,N}_l} \,\dD x  
                     \,=\, -\langle \cE^\hbar_{l,k} \,R^{\hbar,N}_k,\,  R^{\hbar,N}_l \rangle\,.
\end{eqnarray*}
Therefore, $\langle \cE^\hbar_{k,l}R^{\hbar,N}_l, \, R^{\hbar,N}_k\rangle$ is
antisymmetric in $(k,l)$ so that
\begin{eqnarray*}
\sum_{k,l=0}^{N} \langle \cE^\hbar_{k,l} \,R^{\hbar,N}_l , \, R^{\hbar,N}_k\rangle
       \,=\, 0\,,
\end{eqnarray*}
hence $\cI_2(t)$ also vanishes, leading to the conservation of the $L^2$ norm for $R^{\hbar,N}$. 
\end{proof}

%
%

We are now ready to complete the proof of Theorem \ref{th:errorvN} and
follow the lines of the arguments provided in Section \ref{sec:3.2}
for Theorem \ref{th:errorSC}. Here we will only
focus on the additional term $\cE^\hbar$ when evaluating the propagation error between the solution $R^{\hbar,N}$
to \eqref{Hermite:DN} and the projection $\cP_{N} R^\hbar$, where $R^\hbar$ is the
solution  to  the von Neumann equation\eqref{vNvW}.   We need to estimate the nonlocal term
involving $\cE_{k,l}^\hbar$, namely,
$$
\cI_N(t) \,=\, \sum_{k=0}^N \sum_{l> N} \int_\R   \cE_{k,l}^\hbar(x)\,
             \,R^\hbar_l(t|\,x) \,\,\overline{\left(R^\hbar_k - R^{\hbar,N}_k\right)}(t|\,x) \,\dD x\,.
           $$
           Let us re-write $\cI_N(t)$ as
\begin{eqnarray*}
 \cI_N(t)  &=& \iint \cE^\hbar(x,y)\, \overline{\left(\cP_NR^\hbar-R^{\hbar,N}\right)}(t|\,x,y)  \, \left( I-
  \cP_N\right)\,R^\hbar(t|\,x,y) \,\dD
               y\,\dD x
  \\
   &\leq& \|\cE^\hbar\, \left(\cP_NR^\hbar(t) - R^{\hbar,N}(t)\right) \|_{L^2}\, \| R^\hbar(t) -\cP_NR^\hbar(t)\|_{L^2_{x,y}} \,\,. 
\end{eqnarray*}
           Using that
           $$
|\cE^\hbar(x,y)|\,=\,\left|\frac{V(x+\tfrac\hbar 2\, y)-V(x-\tfrac\hbar 2\; y)}{\hbar}\,-\,V'(x)\,y\right|\,\le\,\frac{\hbar^2}{24}\,|y|^3\,\|V^{(3)}\|_{L^\infty}\,,
$$
we have 
\begin{eqnarray*}
  && \|\cE^\hbar\, \left(R^{\hbar,N}(t)-\cP_NR^\hbar(t)\right) \|_{L^2_{x,y}}^2
\,=\,  \iint  |\cE^\hbar(x,y)|^2 \,\left|\left(R^{\hbar,N}-\cP_NR^\hbar\right)(t|\,x,y) \right|^2 \dD y \,\dD x
\\
  &&\leq\,
                                                             \left(\frac{\hbar^2}{24}\,\|V^{(3)}\|_{L^\infty}\right)^2 \,\sum_{j,k=0}^N\int
              \left(R^{\hbar,N}_k-R^\hbar_k\right)\,\overline{\left(R^{\hbar,N}_j-R^\hbar_j\right)}(t|\,x)\,\dD x \,  \int |y|^6 \,\Phi_k(y) \,\Phi_j(y)\,
                                                             \dD y \,.
\end{eqnarray*}
Now using repeatedly that
\begin{equation*}
y\,\Phi_k(y)\,=\, \sqrt{\frac{k+1}{2}}\,\Phi_{k+1}(y) \,+\, \sqrt{\frac{k}{2}}\,\Phi_{k-1}(y) \,,\quad\, k\,\geq\,0\,,
\end{equation*}
we have
$$
y^3 \Phi_k(y) \,=\, \alpha_k \,  \Phi_{k+3}(y) +\beta_k \,
\Phi_{k+1}(y) + \gamma_k \,  \Phi_{k-1}(y) +\delta_k \,  \Phi_{k-3}(y),
$$
with
$$
\left\{
  \begin{array}{l}
    \ds\alpha_k \,=\, \sqrt{\frac{(k+1)(k+2)(k+3)}{8}}\,,\\[0.9em]
    \ds\beta_k \,=\, \frac{3(k+1)}{2}\sqrt{\frac{k+1}{2}}\,,\\[0.9em] 
    \ds\gamma_k \,=\, \frac{3k}{2}\sqrt{\frac{k}{2}}\,,\\[0.9em]
    \ds\tau_k \,=\, \sqrt{\frac{k(k-1)(k-2)}{8}}\,,
  \end{array}\right.
$$
so that
$$
\alpha_k,\, \beta_k,\, \gamma_k,\, \tau_k\,\leq\, \frac{3}{2\sqrt2}\,(k+3)^{3/2}\,.
$$
Hence, we obtain
\begin{eqnarray*}
\int_\R |y|^6 \Phi_j(y)\,\Phi_k(y)\,\dD y &=&  \int_\R |y|^3
                                              \Phi_j(y)\,
                                              |y|^3\Phi_k(y)\,\dD y \\[0.9em]
  &=& \left( \alpha_j^2
                                              +\beta_j^2 +\gamma_j^2+ 
                                              \tau_j^2  \right)
                                              \,\delta_{j,k}
  \\[0.9em]
  &+& \ds\left( \alpha_k\beta_j
                                              +\beta_k\gamma_j + 
                                              \gamma_k\tau_j  \right)
                                              \,\delta_{j,k+2} \,+\,
  \left(\alpha_k\gamma_j + \beta_k\tau_j\right)\, \delta_{j,k+4} \,+\,
  \alpha_k\tau_j\, \delta_{j,k+6}
  \\[0.9em]
                                          &+&
                                             \ds \left( \alpha_j\beta_k
                                              +\beta_j\gamma_k + 
                                              \gamma_j\tau_k  \right)
                                              \,\delta_{j+2,k}  \,+\,
  \left(\alpha_j\gamma_k + \beta_j\tau_k\right)\, \delta_{j+4,k} \,+\,
  \alpha_j\tau_k\, \delta_{j+6,k}\,,
\end{eqnarray*}
and for all $0\leq j,\,k\leq N$, denoting $\max(j,k)$ by $j\vee k$,
$$
a_j\, b_k \,\leq\, \frac{9}{8}\,(j\vee k +9)^3, \qquad\forall\,a,\,b\,\in\{\alpha,\,\beta,\,\gamma\,,\tau\}. 
$$
Finally, we get
\begin{eqnarray*}
  \|\cE^\hbar\, \left(R^{\hbar,N}(t)-\cP_NR^\hbar(t)\right) \|_{L^2}^2&\leq&
                                                             \frac{1}{8}\,\left(\frac{\hbar^2}{8}\,\|V^{(3)}\|_{L^\infty}\right)^2\times  \left(                                                     \sum_{j=0}^N
                                                                 (j+9)^3\int
                                                               |R^{\hbar,N}_j
                                                                             -
                                                                             R^\hbar_j|^2
                                                               (t|\,x)\,\dD
                                                               x \right.
  \\
  && +\,  2\,\sum_{j=0}^{N-2}
                                                                 (j+11)^3\int
                                                               |R^{\hbar,N}_j
     -R^\hbar_j|\,   |R^{\hbar,N}_{j+2}-R^\hbar_{j+2}|
                                                               (t|\,x)\,\dD x
  \\ && +\,  2\,\sum_{j=0}^{N-4}
                                                                 (j+13)^3\int
                                                               |R^{\hbar,N}_j-R^\hbar_j|\,   |R^{\hbar,N}_{j+4}-R^\hbar_{j+4}|
                                                               (t|\,x)\,\dD  x  
  \\
  && \left. +\,  2\,\sum_{j=0}^{N-6}
                                                                 (j+15)^3\int
                                                               |R^{\hbar,N}_j
     -R^\hbar_j|\,   |R^{\hbar,N}_{j+6}-R^\hbar_{j+6}|
                                                               (t|\,x)\,\dD
     x\right)
  \\
  &\leq &
          \frac{7\,(N+9)^3}{2}\,\left(\frac{\hbar^2}{8}\,\|V^{(3)}\|_{L^\infty}\right)^2\,
          \|R^{\hbar,N}(t)-\cP_N R^\hbar(t)\|_{L^2}^2.
\end{eqnarray*}
Now we are ready to bound the term $\cI_N(t)$ by applying the latter estimate and Lemma \ref{lem:4}, which gives for $N\geq 6$,
\begin{eqnarray*}
\cI_N(t) &\leq&
                \frac{\hbar^2}{4}\,\|V^{(3)}\|_{L^\infty}\,(N+9)^{3/2} \,
                \| R^{\hbar,N}(t) - \cP_NR^\hbar(t)\|_{L^2}\, \|R^\hbar(t)-\cP_NR^\hbar(t)\|_{L^2}
  \\
  &\leq&   \frac{\hbar^2}{4}\, \|V^{(3)}\|_{L^\infty}\,
                \frac{(N+9)^{3/2}}{(2{N}+3)^{p+1}}
         \,\|\left(y^2-\partial_y^2\right)^{p+1}R^\hbar(t)\|_{L^2}\, \|
         R^{\hbar,N}(t) - \cP_NR^\hbar(t)\|_{L^2}
  \\
  &\leq&   \frac{\hbar^2}{4}\, \|V^{(3)}\|_{L^\infty}\, \frac{1}{(2N+3)^{p-1/2}}
                 \,\|\left(y^2-\partial_y^2\right)^{p+1}
         R^\hbar(t)\|_{L^2}\, \| R^{\hbar,N}(t) - \cP_NR^\hbar(t)\|_{L^2}
         \\
  &\leq&   \frac{\hbar^2}{4}\, \|V^{(3)}\|_{L^\infty}\, \frac{ K_p \,\cN_{2p+2}(t) }{(2N+3)^{p-1/2}}
                 \,\| R^{\hbar,N}(t) - \cP_NR^\hbar(t)\|_{L^2}\,,
\end{eqnarray*}
where $K_p$ is a constant depending on $p$ only and $\cN_{2p+2}(t)$ is evaluated in Proposition \ref{prop:reg_R}.

Finally, gathering together this latter result and those obtained in
the proof of Theorem \ref{th:errorSC} on the contribution of $\cD^\star
R^\hbar_{N+1}$, we are lead to the  following inequality
\begin{eqnarray*}
\frac{\dD}{\dD t}\| R^{\hbar,N}(t) - \cP_N R^\hbar(t) \|_{L^2} & \leq &  \frac{\sqrt{C_p\,\max(1,|V'(0)|,\|V''\|_{L^\infty})}}{(2{N}+3)^{p-1/2}}\, \cN_{2p+1}(t) 
    \\
  &+& \frac{\hbar^2}{4}\,\|V^{(3)}\|_{L^\infty}\, \frac{2\,K_p\,\cN_{2p+2}(t)}{(2N+3)^{p-1/2}}\,.
\end{eqnarray*}
Thus, we have
\begin{eqnarray*}
\| R^{\hbar,N}(t) - \cP_N R^\hbar(t) \|_{L^2}  &\leq& \frac{\sqrt{C_p\,\max(1,|V'(0)|,\|V''\|_{L^\infty})}}{(2{N}+3)^{p-1/2}}\,\frac{e^{C_{2p+1}[V]t}-1}{C_{2p+1}[V]}\, \cN_{2p+1}(0)
  \\
  &+& \frac{2\,K_p\,\hbar^2}{4\, (2N+3)^{p-1/2}}\,\|V^{(3)}\|_{L^\infty}\, \,\frac{e^{C_{2p+2}[V]t}-1}{C_{2p+2}[V]}\, \cN_{2p+2}(0)\,.
\end{eqnarray*}
Finally from this latter inequality and the estimate on the norm of the operator $I-\mathcal P_N$ in Lemma \ref{lem:4}, on
the operator $I-\cP_N$, we get the sought inequality.


\section{Numerical simulations}
\label{sec:5}
\setcounter{equation}{0}
\setcounter{figure}{0}
\setcounter{table}{0}

{We now perform a few numerical simulations to illustrate the spectral accuracy of  the
proposed  Hermite  method for the approximation
of the solution to the von Neumann equation \eqref{vNvW} in the case of a quartic
potential $V$. We refer to \cite{FG1:24} for more extensive numerical
simulations for various potential. 
\\
Thus, we  consider the  isotropic quartic potential
$$
V(x) \,:=\, \frac{x^2}{2} \,+\,  \chi\,\frac{x^4}{4},
$$
with $\chi=1/2$, in order to benchmark the convergence rate of the proposed
method.  By injecting this potential in the  von Neumann equation \eqref{vNvW}, one
can easily verify that the von Neumann equation takes the simple form:
$$
\partial_t R^\hbar \,=\,i \left(\partial_{xy} R^\hbar \,-\,  V^\prime(x) \,y R^\hbar\,-\, \chi\,\frac{\hbar^2}{4} \,x\,y^3 R^\hbar\right)\,
$$
since derivatives of order larger than  five of the potential  vanish
identically.  We then choose the initial datum
\cite{manfredi1996theory},  which is a coherent state (in appropriate coordinates)
$$
R_{\rm in}^\hbar(x,y) \,=\,\frac{1}{\sqrt{2\pi}\sigma_x} \exp\left( -
 \frac{1}{2} \left(\frac{x^2}{\sigma_x^2} + y^2 \right) \right)\,,
$$
with $\sigma_x=0.6$ and $\hbar=0.1$.}

In this situation, there is no explicit solution to the von Neumann
equation \eqref{vNvW}, but we compute a reference solution
$R^\hbar_{{\rm ref}}$ using a fine mesh  $N=500$. We also choose the $x$ domain as $[-4,4]$ and  mainly focus on the
convergence with respect to $N$ taking the time step $\Delta t$ and
the space step $\Delta x$ sufficiently small, $\Delta t=10^{-4}$ and
$\Delta x = 10^{-3}$.  Numerical errors  are
presented in Table \ref{tab:1}, where the numerical error is given as
$$
\cE(N) \,:=\, \max_{t\in [0,2\pi]}\left(\sum_{k=0}^{N}\|
  R^{\hbar,N}_{k}(t) - R^\hbar_{{\rm ref}, k}(t^n) \|^2_{L^2_x}\right)^{1/2}.
$$

  \begin{figure}[!htbp]
  \begin{minipage}[b]{.5\linewidth}
    \centering
    \includegraphics[width=9.cm]{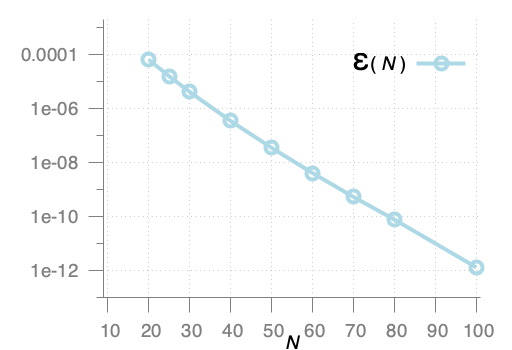}
    \captionof{figure}{$L^2$ error in log scale.}
   \end{minipage}\hfill
   \begin{minipage}[b]{.5\linewidth}
    \centering
    \begin{tabular}{|l|l|l|}
          \hline
	Number of  &  Discrete $L^2$  & Order of \\[0.35em] 
      modes $N $  &  error  $\cE(N)$  & accuracy \\[0.45em]  \hline
	$20$  &  $6.3316\, \times\,10^{-05}$     & X    \\[0.45em]  \hline
	$30$  &  $4.0451\, \times\,10^{-06}$     & 6.8    \\[0.45em]  \hline
        $40$  &  $3.3937\, \times\,10^{-07}$     & 8.6  \\[0.45em]  \hline
        $50$  &  $3.4347\, \times\,10^{-08}$     & 10     \\[0.45em]  \hline
        $60$  &  $3.9989\, \times\,10^{-09}$   & 11     \\[0.45em]  \hline		
        $70$  &  $5.2016\, \times\,10^{-10}$   & 13     \\[0.45em]  \hline		
    \end{tabular}
    \captionof{table}{$L^2$ error with respect to the number of Hermite modes $N$.}
    \label{tab:1}
  \end{minipage}
\end{figure}

\section{Conclusion}
\label{sec:6}
\setcounter{equation}{0}
\setcounter{figure}{0}
\setcounter{table}{0}
In conclusion, we have introduced a new approach to discretize the
von Neumann equation in the semiclassical limit. By using Weyl's
variables and  a truncated Hermite expansion of the density
operator, we are able to address the stiffness due to the semiclassical scaling of the equation. Our method allows for error estimates by leveraging
the propagation of regularity on the exact solution. Therefore, we believe that our asymptotic
preserving numerical approximation, coupled with Hermite polynomials,
represents an interesting approach for accurately solving the von
Neumann equation in the semiclassical regime. 

{In this article, we focus on the one-dimensional case to avoid
increasing technical complexity. However, there is no doubt that this
analysis can be extended to higher dimensions, provided that
appropriate regularity assumptions are made on the external potential
and the initial data.}

\appendix
\section{Proof of Proposition \ref{prop:reg_R}}
\label{appA}
The case $m=0$ corresponds to the conservation of the $L^2$ norm
$$
\| R^\hbar(t) \|_{L^2} \,=\,  \| R^\hbar_{\rm in} \|_{L^2}\,,\qquad t \geq 0. 
$$
Let us begin with the propagation of quantities corresponding to
phase-space derivatives in the Wigner equation, that is to say  
$\partial_xR^\hbar$ and $yR^\hbar$. Later, it will be convenient to use the notation
\[
L\,:=\,\max(1,\|V''\|_{L^\infty})\,.
\]
On the one hand, the equation for the $x$-derivative of $R^\hbar$ is given by
\begin{eqnarray*}
\partial_t\partial_xR^\hbar(t|\,x,y) &=& i\,\partial_x\partial_y\partial_xR^\hbar(t|\,x,y)\,-\,i\,\partial_x\left(\frac{V(x+\frac12\hbar y)-V(x-\frac12\hbar y)}{\hbar}R^\hbar(t|\,x,y)\right)
\\
&=& i\,\partial_x\partial_y\partial_xR^\hbar(t|\,x,y)\,-\,i\,\frac{V(x+\frac12\hbar y)-V(x-\frac12\hbar y)}{\hbar}\partial_xR^\hbar(t|\,x,y)
\\
&-&\,i\,\frac{V'(x+\frac12\hbar y)\,-\,V'(x-\frac12\hbar y)}{\hbar}R^\hbar(t|\,x,y)\,,
\end{eqnarray*}
so that
\[
\partial_xR^\hbar(t|\,x,y)\,=\,\cS_\hbar(t)\partial_xR^\hbar(0|x,y)\,-\,i\,\int_0^t\cS_\hbar(t-s)\frac{V'(x+\frac12\hbar y)-V'(x-\frac12\hbar y)}{\hbar}\,R^\hbar(s|\,x,y)\,\dD s\,,
\]
where $\cS_\hbar(t)$ represents the one-parameter group \eqref{def:S} associated
to \eqref{vNvW}. Since $\cS_\hbar(t)$ is an $L^2_{x,y}$ isometry,
\begin{eqnarray*}
\|\partial_xR^\hbar(t)\|_{L^2}
  &\le&\|\partial_xR^\hbar(0)\|_{L^2}\,+\,\int_0^t \left\|\frac{V'(x+\frac12\hbar y)-V'(x-\frac12\hbar y)}{\hbar\, y}\,y\,R^\hbar(s|\,x,y)\right\|_{L^2}\,\dD s
\\
&\le&\|\partial_xR^\hbar(0)\|_{L^2}+\|V''\|_{L^\infty}\,\int_0^t\|y\,R^\hbar(s)\|_{L^2}\dD s\,.
\end{eqnarray*}
Similarly, 
\begin{eqnarray*}
\partial_t\left(y\,R^\hbar(t|\,x,y)\right) &=& i\,\partial_x\partial_y\left(y\,R^\hbar(t|\,x,y)\right)\,-\,i\,\frac{V(x+\frac12\hbar y)-V(x-\frac12\hbar y)}{\hbar}\left(y\,R^\hbar(t|\,x,y)\right)
\\
&-& i\,\partial_xR^\hbar(t|\,x,y)\,,
\end{eqnarray*}
so that, arguing as above, 
\[
\|y\,R^\hbar(t)\|_{L^2}\le\|y\,
R^\hbar(0)\|_{L^2} \,+\,\int_0^t\|\partial_xR^\hbar(s)\|_{L^2}\dD s\,.
\]
Summarizing, we have proved that
\begin{eqnarray*}
\|\partial_xR^\hbar(t)\|_{L^2}&+&\|y\,
  R^\hbar(t)\|_{L^2} - \|\partial_xR^\hbar(0)\|_{L^2}\,-\,\|y\,R^\hbar(0)\|_{L^2}
\\
&\le &\int_0^t\left(\|V''\|_{L^\infty}\,\|y\,R^\hbar(s)\|_{L^2}\,+\,\|\partial_xR^\hbar(s)\|_{L^2}\right)\dD s
\\
&\le& L\,\int_0^t\left(\|\partial_xR^\hbar(s)\|_{L^2}\,+\,\|y\,R^\hbar(s)\|_{L^2}\right)\dD s\,.
\end{eqnarray*}
Applying Gronwall's lemma, implies that
\begin{equation}
  \label{<1}
\|\partial_xR^\hbar(t)\|_{L^2}\,+\,\|y\,R^\hbar(t)\|_{L^2}
\,\le\,\left(\|\partial_xR^\hbar(0)\|_{L^2}\,+\,\|y\,R^\hbar(0)\|_{L^2}\right)\,e^{L\,t}\,.
\end{equation}
Here we observe that this estimate is uniform in (in fact independent of)
$\hbar$. Next we discuss the propagation of quantities corresponding to moments
in the Wigner equation, that is, $xR^\hbar$  and
$\partial_yR^\hbar$. On the one hand, the equation for $xR^\hbar(t|\,x,y)$ is 
\begin{eqnarray*}
\partial_t(xR^\hbar(t|\,x,y)) &=& i\,\partial_x\partial_y(xR^\hbar(t|\,x,y))\,-\,i\,\frac{V(x+\frac12\hbar y)-V(x-\frac12\hbar y)}{\hbar}(xR^\hbar(t|\,x,y))
\\
&-& \,i\,\partial_yR^\hbar(t|\,x,y)\,,
\end{eqnarray*}
so that, arguing as above
\[
\|x\,R^\hbar(t)\|_{L^2}\,\le\,\|x\,R^\hbar(0)\|_{L^2}\,+\,\int_0^t\|\partial_yR^\hbar(s)\|_{L^2}\dD s\,.
\]
On the other hand, the equation for $\partial_yR^\hbar$ is
\begin{eqnarray*}
\partial_t\partial_yR^\hbar(t|\,x,y) &=& i\,\partial_x\partial_y\partial_yR^\hbar(t|\,x,y)\,-\,i\,\partial_y\left(\frac{V(x+\frac12\hbar y)-V(x-\frac12\hbar y)}{\hbar}\,R^\hbar(t|\,x,y)\right)
\\
&=& i\,\partial_x\partial_y\partial_y
    R^\hbar(t|\,x,y)\,-\,i\frac{V(x+\frac12\hbar y)-V(x-\frac12\hbar
    y)}{\hbar} \,\partial_yR^\hbar(t|\,x,y)
\\
&-&\frac i 2\,\left(V'\left(x+\frac12\hbar y\right)+V'\left(x-\frac12\hbar y\right)\right)\,R^\hbar(t|\,x,y)\,.
\end{eqnarray*}

Here it is worth mentioning that there are several options for the treatment of the last term on the last right-hand side. One could of course assume that $V'$ is bounded, but this is not very satisfying as it leaves out the case of a harmonic oscillator, where $V$ is quadratic in $x$.
In the latter case, one can assume instead that $V''$ is bounded. This
is the simpler case, allowing us to include a confining quadratic
potential if needed. Besides, since $V(x)\to+\infty$ as
$|x|\to+\infty$, it has a minimum point at $x_0\in\R$. Without
loss of generality, assume henceforth that $x_0=0$, so that $V'(0)=0$.
Therefore, by the mean value inequality
\[
|V'(z)-V'(0)|\,\le\,\|V''\|_{L^\infty}\,|z|\,,
\] 
and hence, having recast the last equality as
\begin{eqnarray*}
\partial_t\partial_y R^\hbar(t|\,x,y) &=& i\,\partial_x\partial_y\partial_yR^\hbar(t|\,x,y)\,-\,i\frac{V(x+\frac12\hbar y)\,-\,V(x-\frac12\hbar y)}{\hbar}\,\partial_yR^\hbar(t|\,x,y)
\\
&-&\,i\,V'(x)\,R^\hbar(t|\,x,y)\,-\,i\,\frac{V'(x+\tfrac12\hbar y)\,+\,V'(x-\tfrac12\hbar y)\,-\,2\,V'(x)}{2}\,R^\hbar(t|\,x,y)\,,
\end{eqnarray*}
we conclude as above that
$$
\|\partial_y R^\hbar(t)\|_{L^2}\,\le\,\|\partial_y R^\hbar(0)\|_{L^2}\,+\,\|V''\|_{L^\infty}\,\int_0^t\left(\|x\,R^\hbar(s)\|_{L^2}\,+\,\frac{\hbar}{2}\,\|y\,R^\hbar(s)\|_{L^2}\right)\,\dD s\,.
$$
Combining this inequality with the one for $x\,R^\hbar(t|\,x,y)$ leads to 
\begin{eqnarray*}
&&\|x\,R^\hbar(t)\|_{L^2}\,+\,\|\partial_yR^\hbar(t)\|_{L^2}\,-\,\|x\,R^\hbar(0)\|_{L^2}\,-\,\|\partial_yR^\hbar(0)\|_{L^2}
\\
&&\le\,\int_0^t\left( \|V''\|_{L^\infty}\|x\,R^\hbar(s)\|_{L^2}\,+\,\|\partial_yR^\hbar(s)\|_{L^2}\right)\,\dD s
\,+\,\frac{\hbar}{2}\, \|V''\|_{L^\infty}\, \int_0^t\|y\,R^\hbar(s)\|_{L^2}\,\dD s\,.
\end{eqnarray*}
The last term on the right-hand side is bounded as in \eqref{<1}, by
\[
\left(\|\partial_xR^\hbar(0)\|_{L^2}+\|y\,R^\hbar(0)\|_{L^2}\right)\,e^{t\max(1,\|V''\|_{L^\infty})}\,.
\]
Thus
\[
\begin{aligned}
& \|x\,R^\hbar(t)\|_{L^2} \,+\;\|\partial_yR^\hbar(t)\|_{L^2}\,-\,\|x\,R^\hbar(0)\|_{L^2}\,-\,\|\partial_yR^\hbar(0)\|_{L^2}
\\
&\le L\int_0^t(\|x\,R^\hbar(s\|_{L^2}\,+\,\|\partial_yR^\hbar(s)\|_{L^2})\,\dD s
\,+\,\frac{\hbar}{2}\,\left(\|\partial_xR^\hbar(0)\|_{L^2}\,+\,\|y\,R^\hbar(0)\|_{L^2}\right)\,(e^{Lt}-1)
\end{aligned}
\]
and by Gronwall's lemma, we conclude that
\begin{equation}\label{<2}
\begin{aligned}
\|x\,R^\hbar(t)\|_{L^2}\,+\,\|\partial_yR^\hbar(t)\|_{L^2} &\le\left(\|x\,R^\hbar(0)\|_{L^2}\,+\,\|\partial_yR^\hbar(0)\|_{L^2}\right)\,e^{Lt}
\\
& +\,\frac\hbar 2 \,\left(\|\partial_xR^\hbar(0)\|_{L^2}+\|y\,R^\hbar(0)\|_{L^2}\right)\,(e^{Lt}-1)\,e^{Lt}\,.
\end{aligned}
\end{equation}
Unlike \eqref{<1}, this bound is not independent of $\hbar$, but its $\hbar$-dependence is not singular as $\hbar\to 0$. Together with \eqref{<1}, the inequality \eqref{<2} is easily transformed into a uniform in $\hbar$ bound for $\hbar\in(0,2]$, say:
\begin{equation}\label{<3}
  \begin{aligned}
\cN_1(t) &\le\, \cN_1(0) \,e^{Lt} \,+\,\frac\hbar 2\,\left(\|\partial_xR^\hbar(0)\|_{L^2}+\|y\,R^\hbar(0)\|_{L^2}\right)\,(e^{Lt}-1)\,e^{Lt}
\\
&\le\, \cN_1(0) \,e^{Lt} \,+\,\frac\hbar 2\, \cN_1(0) \,(e^{Lt}-1)\,e^{Lt}
\\
&\le\, \cN_1(0)\,e^{2Lt}\,,
\end{aligned}
\end{equation}
which yields the desired result for $n=1$.

We now proceed in the same way, for each integer $a$, $b$, $\alpha$ and
$\beta$ such that $a+b+\alpha+\beta=n$ and seek an equation for 
\[
x^a\partial_x^by^\alpha\partial_y^\beta R^\hbar(t|\,x,y).
\]
Starting from
\[
\partial_tR^\hbar(t|\,x,y)\,=\,i\,\partial_x\partial_yR^\hbar(t|\,x,y)\,-\,i\,\frac{V(x+\frac12\hbar
  y)-V(x-\frac12\hbar y)}{\hbar} \, R^\hbar(t|\,x,y)\,,
\]
we see that
\[
\begin{aligned}
\partial_t\left(x^a\partial_x^by^\alpha\partial_y^\beta
  R^\hbar(t|\,x,y)\right) &\,=\,i\,\partial_x\partial_y\left(x^a\partial_x^by^\alpha\partial_y^\beta R^\hbar(t|\,x,y)\right)
\\
&\,-\,i\,\frac{V(x+\frac12\hbar y)-V(x-\frac12\hbar y)}{\hbar}\,\left(x^a\partial_x^by^\alpha\partial_y^\beta R^\hbar(t|\,x,y)\right)\,+\,i\,\cT_1 \,-\,i\,\cT_2\,,
\end{aligned}
\]
where
$$
\left\{
  \begin{array}{l}
\ds\cT_1 \,:=\, \left[x^a\partial_x^by^\alpha\partial_y^\beta,\,\partial_x\partial_y\right]\,R^\hbar(t|\,x,y)\,, 
    \\ [0.9em]
    \ds\cT_2 \,:=\, \left[x^a\partial_x^by^\alpha\partial_y^\beta,\frac{V(x+\frac12\hbar y)-V(x-\frac12\hbar y)}{\hbar}\right]\,R^\hbar(t|\,x,y)\,.
\end{array}\right.
    $$
Hence,  the idea is to treat $\cT_j$ as source terms for $j=1,2$, to find
\[
\begin{aligned}
\|x^a\partial_x^by^\alpha\partial_y^\beta R^\hbar(t|\,x,y)\|_{L^2}\,\le\,&\|x^a\partial_x^by^\alpha\partial_y^\beta R^\hbar(0|x,y)\|_{L^2}
\\
&\,+\,\int_0^t(\|\cT_1(s|\,\cdot,\cdot)\|_{L^2}\,+\,\|\cT_2(s|\,\cdot,\cdot)\|_{L^2})\dD s\,.
\end{aligned}
\]
First, we consider $\cT_1$ and use that
\[
\begin{aligned}
 \left[x^a\partial_x^by^\alpha\partial_y^\beta,\partial_x\partial_y\right]\, & =\, \left[x^a\partial_x^by^\alpha\partial_y^\beta,\partial_x\right]\,\partial_y\,+\,\partial_x\left[x^a\partial_x^by^\alpha\partial_y^\beta,\partial_y\right]\,,
\\
& \,=\, \left[x^a,\,\partial_x\right]\,\partial_x^by^\alpha\partial_y^\beta\partial_y\,+\,\partial_xx^a\partial_x^b\,\left[y^\alpha,\partial_y\right]\,\partial_y^\beta
\\
&\,=\,-ax^{a-1}\,\partial_x^by^\alpha\partial_y^{\beta+1}\,-\,\alpha\,\partial_xx^a\partial_x^by^{\alpha-1}\partial_y^\beta
\\
&\,=\,-a\,x^{a-1}\,\partial_x^by^\alpha\partial_y^{\beta+1}\,-\,\alpha
x^a\partial_x^{b+1}y^{\alpha-1}\partial_y^\beta \,-\,\alpha\,\left[\partial_x,x^a\right]\partial_x^by^{\alpha-1}\partial_y^\beta
\\
&\,=\,-a\,x^{a-1}\partial_x^by^\alpha\partial_y^{\beta+1}\,-\,\alpha x^a\partial_x^{b+1}y^{\alpha-1}\partial_y^\beta
\,-\,\alpha\, a\,x^{a-1}\,\partial_x^by^{\alpha-1}\partial_y^\beta\,.
\end{aligned}
\]
In the first two terms on the right-hand side, the sum of exponents is
\[
a-1+b+\alpha+\beta+1=a+b+1+\alpha-1+\beta=n\,,
\]
equal to that in $x^a\partial_x^by^\alpha\partial_y^\beta$, whereas the last term has a sum of exponents
\[
a-1+b+\alpha-1+\beta\,=\, n-2,
\]
less than that in $x^a\partial_x^by^\alpha\partial_y^\beta$. Hence we obtain
\[
\|\cT_1(s|\cdot,\cdot)\|_{L^2}\,\le\, 2\,n\,\cN_n(s)\,+\,n^2\cN_{n-2}(s)\,\le\,(2n+n^2)\,\cN_n(s)\,.
\]
Now for the second term $\cT_2$, we have
\[
\begin{aligned}
{}\left[x^a\partial_x^by^\alpha\partial_y^\beta,\frac{V(x+\frac\hbar2y)-V(x-\frac\hbar2y)}{\hbar}\right]
& =\,x^a\left[\partial_x^b,\frac{V(x+\frac\hbar2y)-V(x-\frac\hbar2y)}{\hbar}\right]y^\alpha\partial_y^\beta
\\
& +\,x^a\partial_x^by^\alpha\left[\partial_y^\beta,\frac{V(x+\frac\hbar2y)-V(x-\frac\hbar2y)}{\hbar}\right]\,.
\end{aligned}
\]
By Leibniz's formula
\[
\begin{aligned}
\left[\partial_x^b,\frac{V(x+\frac\hbar2y)-V(x-\frac\hbar2y)}{\hbar}\right]\,=\,&\sum_{j=1}^b{b\choose j}\,\frac{V^{(j)}(x+\frac\hbar2y)-V^{(j)}(x-\frac\hbar2y)}{\hbar}\,\partial_x^{b-j}\,,
\\
\left[\partial_y^\beta,\frac{V(x+\frac\hbar2y)-V(x-\frac\hbar2y)}{\hbar}\right]\,=\,&\sum_{k=1}^\beta{\beta\choose k}\,\frac{V^{(k)}(x+\frac\hbar2y)-(-1)^k\,V^{(k)}(x-\frac\hbar2y)}{2^k\,\hbar}\,\hbar^k\,\partial_y^{\beta-k}\,,
\end{aligned}
\]
so that
\[
\begin{aligned}
& \left[x^a\partial_x^by^\alpha\partial_y^\beta,\frac{V(x+\frac\hbar2y)-V(x-\frac\hbar2y)}{\hbar}\right]
\\
&\quad \,=\,\sum_{j=1}^b{b\choose j}\frac{V^{(j)}(x+\frac\hbar2y)-V^{(j)}(x-\frac\hbar2y)}{\hbar}x^a\partial_x^{b-j}y^\alpha\partial_y^\beta
\\
&\quad \quad\,+\,\sum_{k=1}^\beta{\beta\choose k}\frac{V^{(k)}(x+\frac\hbar2y)-(-1)^k\,V^{(k)}(x-\frac\hbar2y)}{2^k\hbar}\hbar^kx^a\partial_x^by^\alpha\partial_y^{\beta-k}
\\
&\quad \quad\,+\,\sum_{k=1}^\beta{\beta\choose k}x^a\left[\partial^b_x,\frac{V^{(k)}(x+\frac\hbar2y)-(-1)^k\,V^{(k)}(x-\frac\hbar2y)}{2^k\hbar}\right]\hbar^ky^\alpha\partial_y^{\beta-k}
\\
&\quad\,=\,b\,\frac{V'(x+\frac\hbar2y)-V'(x-\frac\hbar2y)}{\hbar y}\,x^a\partial_x^{b-1}y^{\alpha+1}\partial_y^\beta
\\
&\quad \quad\,+\,\sum_{j=2}^b{b\choose j}\, \frac{V^{(j)}(x+\frac\hbar2y)-V^{(j)}(x-\frac\hbar2y)}{\hbar y}\,x^a\,\partial_x^{b-j}y^{\alpha+1}\partial_y^\beta
\\
&\quad \quad\,+\,\beta\,\frac{V'(x+\frac\hbar2y)+V'(x-\frac\hbar2y)-2\,V'(x)}{2\hbar y}\,\hbar \,x^a\,\partial_x^by^{\alpha+1}\partial_y^{\beta-1}
\\
&\quad \quad\,+\,\beta\,\frac{V'(x)-V'(0)}{x}\,x^{a+1}\,\partial_x^by^\alpha\partial_y^{\beta-1}\,+\,\beta\, V'(0)\,x^a\,\partial_x^by^\alpha\partial_y^{\beta-1}
\\
&\quad \quad\,+\,\sum_{k=2}^\beta{\beta\choose k}\,\frac{V^{(k)}(x+\frac\hbar2y)-(-1)^k\,V^{(k)}(x-\frac\hbar2y)}{2^k\hbar}\,\hbar^k\,x^a\,\partial_x^by^\alpha\partial_y^{\beta-k}
\\
&\quad \quad\,+\,\sum_{k=1}^\beta\sum_{j=1}^b{\beta\choose k}{b\choose j}\,\frac{V^{(k+j)}(x+\frac\hbar2y)-(-1)^k\,V^{(k+j)}(x-\frac\hbar2y)}{2^k\hbar}\,\hbar^k\,x^a\,\partial^{b-j}_xy^\alpha\partial_y^{\beta-k}\,.
\end{aligned}
\]
The first term in the right-hand side involves the ratio
\[
\left|\,\frac{V'(x+\frac\hbar2y)\,-\,V'(x-\frac\hbar2y)}{\hbar\; y}\right|\,\le\|V''\|_{L^\infty}\,,
\]
and the monomial $x^a\partial_x^{b-1}y^{\alpha+1}\partial_y^\beta$, whose exponents add up to
\[
a+b-1+\alpha+1+\beta=n\,,
\]
which is equal to the sum of exponents in the monomial
$x^a\partial_x^by^\alpha\partial_y^\beta$. The second term in the
right-hand side involves monomials whose exponents add up to
\[
a+b-j+\alpha+1+\beta\,<\,n
\] 
since $j\ge 2$, multiplied by the ratio
\[
\left|\,\frac{V^{(j)}(x+\frac\hbar2y)\,-\,V^{(j)}(x-\frac\hbar2y)}{\hbar\, y}\right|\,\le\|V^{(j+1)}\|_{L^\infty}<\infty\,.
\]
The last three terms on the right-hand side involve monomials whose exponents add up to 
\[
a+b-j+\alpha+\beta-k\,<\,n
\]
with $j+k\ge 1$, multiplied by $ V'(0)$ or  a ratio of the form
\[
\left|\,\frac{V^{(k+j)}(x+\frac\hbar2y)-(-1)^k\,V^{(k+j)}(x-\frac\hbar2y)}{2^k\hbar}\right|\,\hbar^k\,\le\,\|V^{(k+j)}\|_{L^\infty}\,\hbar^{k-1}
\]
with $k\ge 1$ and $k+j\ge 2$. There remains the third and fourth terms, involving monomials whose exponents add up to $a+b+\alpha+\beta=n$, multiplied by ratios of the form
\[
\left|\,\frac{V'(x+\frac\hbar2y)\,+\,V'(x-\frac\hbar2y)-2\,V'(x)}{2\hbar\, y}\right|\,\le\,\frac12\|V''\|_{L^\infty}\,<\,\infty\,,
\]
or
\[
\left|\,\frac{V'(x)-V'(0)}{x}\right|\,\le\|V''\|_{L^\infty}<\infty\,.
\]
Summarizing, assuming that $\hbar\le 1$ while $a+b+\alpha+\beta=n$ with $a,b,\alpha,\beta\ge 0$,
\[
\begin{aligned}
\|\cT_2(s|\,\cdot,\cdot)\|_{L^2} &\le\, n\,\|V''\|_{L^\infty}\,\cN_n(s)\,+\,\underbrace{\sum_{j=2}^b{b\choose j}\|V^{(j+1)}\|_{L^\infty}\,\cN_{n-j+1}(s)}_{\le 2^n\max_{2\le i\le n}\|V^{(i+1)}\|_{L^\infty}\cN_{n-1}(s)}\,+\,\frac12\,\|V''\|_{L^\infty}\,n\,\hbar\,\cN_n(s)
\\
&\quad\,+\,\|V''\|_{L^\infty}\,n\,\cN_n(s)\,+\,n\,|V'(0)|\,\cN_{n-1}(s)\,+\,\underbrace{\sum_{k=2}^\beta{\beta\choose k}\|V^{(k)}\|_{L^\infty}\,\hbar^{k-1}\,\cN_{n-k}(s)}_{\le\, 2^n\,\max_{2\le i\le n}\|V^{(i)}\|_{L^\infty}\,\cN_{n-2}(s)}
\\
&\quad\,+\,\underbrace{\sum_{k=1}^\beta\sum_{j=1}^b{\beta\choose
    k}{b\choose j}\|V^{(k+j)}\|_{L^\infty}\,\hbar^{k-1}\,\cN_{n-k-j}(s)}_{\le
  \,2^n\cdot 2^n\,\max_{2\le i\le n}\|V^{(i)}\|_{L^\infty}\cN_{n-2}(s)}\,.
\end{aligned}
\]
Hence, it yields
\[
\begin{aligned}
\|\cT_2(s|\cdot,\cdot)\|_{L^2} &\le\,\max\left(|V'(0),\,\max_{2\le
    k\le
    n+1}\|V^{(k)}\|_{L^\infty}\right)\,\left(n+2^n+\frac12\,n\,\hbar+n+n+2^n\hbar+2^n\cdot
  2^n\right)\,\cN_n(s)
\\
&\le\,\max\left(|V'(0),\,\max_{2\le k\le n+1}\|V^{(k)}\|_{L^\infty}\right)\left(\frac72+2^n\right)\,(2^n+2)\,\cN_n(s)
\end{aligned}
\]
since $0<\hbar\le 1$.
\\

Thus, we have proved that
\begin{eqnarray*}
 && \|x^a\partial_x^by^\alpha\partial_y^\beta
R^\hbar(t)\|_{L^2} \,\le\, \|x^a\partial_x^by^\alpha\partial_y^\beta
R^\hbar(0)\|_{L^2}
\\[0.85em]
&&+\,\left(1+\max\left(|V'(0),\max_{2\le k\le n+1}\|V^{(k)}\|_{L^\infty}\right)\right)\,\int_0^t\left(\frac72\,+\,2^n\right)\,(2^n+2)\,\cN_n(s)\,\dD s\,,
\end{eqnarray*}
so that
\begin{eqnarray*}
&& \cN_n(t)\,\le\,\cN_n(0)
\\[0.85em]
&&+\left(1+\max\left(|V'(0),\max_{2\le k\le n+1}\|V^{(k)}\|_{L^\infty}\right)\right)\,\left(\frac72+2^n\right)\,(2^n+2)(n+1)^4\,\int_0^t\cN_n(s)\dD s
\end{eqnarray*}
and one concludes by Gronwall's inequality.

%
%
%
%

\bibliographystyle{abbrv}
\bibliography{refer}

\end{document}